\def\vol{\operatorname{Vol}}
\newcommand{\im}{\mathop{\rm Im}}
\newcommand{\Vect}{\mathop{\rm Vect}}
\def\wf{\operatorname{WF}}
\def\graph{\operatorname{graph}}
\def\ess{\operatorname{ess\; supp}}
\newtheorem{proposition}{Proposition}
\newtheorem{theorem}{Theorem}
\newtheorem{corollary}{Corollary}
\newtheorem{lemma}{Lemma}
\theoremstyle{definition}
\newtheorem{remark}{Remark}
\newtheorem{definition}{Definition}
\def\im{\operatorname{Im}}
\title{Elliptic Operators Associated with Groups of\\ Quantized Canonical Transformations}
\date{}
\author{A. Savin, E. Schrohe, B. Sternin}
\begin{document}

\maketitle

\begin{abstract} 
 Given  a  Lie group $G$ of quantized canonical transformations acting on the space  $L^2(M)$ over a  closed manifold $M$, we define an algebra of so-called $G$-operators on $L^2(M)$.
We show that to $G$-operators we can associate symbols in appropriate crossed products with $G$, introduce a notion of ellipticity and prove the Fredholm property for elliptic elements. 
This framework encompasses many known elliptic theories, for instance, shift operators associated with group actions on $M$, transversal elliptic theory, transversally elliptic pseudodifferential operators on foliations, and Fourier integral operators associated with  coisotropic submanifolds. 
\end{abstract}

\tableofcontents

\section{Introduction}

Let $M$ be a smooth closed manifold and $G$ a discrete group. 
A $G$-operator on $M$ is an operator $D:L^2(M) \to L^2 (M)$ given by a finite sum
\begin{equation}\label{eq-55}
 D=\sum_{g\in G} D_g\Phi_g,
\end{equation}
where the $D_g$ are pseudodifferential operators of order zero on $M$  and $g\mapsto\Phi_g$ defines a representation of $G$ in $\mathcal B L^2(M)$; in fact we shall also consider the more general case of an almost representation,  see Section~\ref{G-operators}.
 
$G$-operators naturally arise, for example,  when $G$ acts on $M$, and the operators $\Phi_g$ are shift operators along the orbits of
the group action: $\Phi_gu(x)=u(g^{-1}(x))$.  Such operators have been studied successfully in the literature, see e.g.~\cite{AnLe1,AnLe2,Con4,CoMo2,NaSaSt17,Per5,SaSt30,SaScSt} and the references cited there; they have interesting applications  in noncommutative geometry, nonlocal problems of mathematical physics and other fields of mathematics. One of the features of  $G$-operators is the fact that their symbols form  essentially noncommutative algebras, crossed products by $G$,  and to understand such symbols one has to use
dynamical properties of the group action on the manifold. 

The aim of this work is to study $G$-operators for which the $\Phi_g$ are given by a representation of $G$ on $\mathcal BL^2(M)$ by {\em quantized canonical transformations}, see e.g.~\cite{Hor4,NOSS1,MSS1,Dui1}. 
Such operators arise  in several recent problems in index theory, see e.g. B\"ar and Strohmaier \cite{BaSt1}\footnote{In fact, the situation considered there is slightly more complicated, since it involves Toeplitz type operators. This case will be addressed in a subsequent publication.}, and in noncommutative geometry, cf. Walters \cite{Walt1}. 
In the simplest case, where $D=\Phi_g$ for a single quantized canonical transformation, we recover the Atiyah-Weinstein index problem  \cite{WE75}; index formulae were given by Epstein and Melrose \cite{EM98} and, in full generality, by Leichtnam, Nest and Tsygan \cite{LNT01}.  
The framework introduced here is a generalization of the case of shift operators, and it is in a certain sense the maximal generalization for which $G$-operators as in \eqref{eq-55} form an algebra with respect to sums and compositions. 
In fact,  a theorem of Duistermaat and Singer \cite{DuSi1} states that the order preserving automorphisms of the algebra $\Psi^*(M)$ of all (classical) pseudodifferential operators on $M$ are given by conjugation by quantized canonical transformations.

As a first result in this article we show in Section 2 that the $G$-operators associated with  (almost) representations of a discrete group $G$ by quantized canonical transformations on  $M$ have symbols in the $C^*$-crossed product $C(S^*M)\rtimes G$ of the algebra of continuous functions on the cosphere bundle of $M$ and $G$, and prove a Fredholm theorem. 

In Section 3 we then study  $G$-operators associated with a Lie group $G$. In this case it is natural to replace the sum in \eqref{eq-55} by an integral over $G$. Again, we define the symbol as an element of a crossed product, introduce ellipticity  and prove a Fredholm  theorem. In contrast to the situation for discrete groups, the symbol for Lie groups is considered on a special 
subspace $T^*_GM\subset T^*M$, which we call the transverse cotangent space. The corresponding crossed product algebra $C(S^*_GM)\rtimes G$
can be viewed  as a replacement for the algebra of functions on the ``bad'' symplectic quotient $T^*_0M//G$.
The key to this result is a careful analysis of the wavefront set of operators of the form $D=\int_G D_g\Phi_g\, dg$, given in Section 4. 

In Section 5 we show how these results encompass a variety of known situations such as the case, where $G$ is a discrete group or a Lie group acting by shifts on $M$, the case of transversally elliptic pseudodifferential operators on foliations, and certain Fourier integral operators associated with  coisotropic submanifolds in the sense of Guillemin and Sternberg. 
 
We finally collect a few basic results on the wave front of oscillatory integrals and quantized canonical transformations in an appendix.

The authors are grateful to Yu.A.~Kordyukov, V.E.~Nazaikinskii and R.~Nest for  useful remarks. 
Supported by DFG (grant SCHR 319 8-1), RFBR (grants  15-01-08392 and 16-01-00373), Ministry of Education and  Science  of the Russian Federation (agreement No. 02.a03.21.0008).

\section{$G$-operators for Discrete Groups}\label{G-operators}

\paragraph{Definition of $G$-operators.}

Let $M$ be a closed smooth manifold and  $G$  a discrete group. Suppose that we are given an almost representation  
\begin{equation}\label{rep1}
 \begin{array}{rcc}
   \rho:G &\longrightarrow & \mathcal{B}L^2(M)\\
    g & \longmapsto & \Phi_g 
 \end{array}
\end{equation}
of this group in   $L^2(M)$ by means of quantized canonical transformations (see  \cite{Dui1,Hor4,MSS1,NOSS1}).
This means that we are given a mapping   $g\mapsto \Phi_g$, which takes elements of   $G$
to quantized canonical transformations  denoted by $\Phi_g$, such that 
\begin{equation}\label{almost1}
 \Phi_e\equiv 1,\qquad \Phi_{gh}\equiv\Phi_g\Phi_h, \qquad g,h\in G.
\end{equation}
In this section   $\equiv$ means equality up to compact operators. The relations  \eqref{almost1} imply ellipticity and the Fredholm property of   $\Phi_g$ for all $g\in G$.

For completeness of the presentation, we review necessary facts about quantized canonical transformations in the appendix.

\begin{definition} A $G$-{\em operator} is an operator that can be written in the form
\begin{equation}\label{gop1}
 D =\sum_g D_g\Phi_g:L^2(M)\longrightarrow L^2(M),
\end{equation}
where $D_g$  are zero-order pseudodifferential operators on $M$, which are nonzero only for a finite set of group elements.  
\end{definition}

\begin{remark}Suppose $G$ acts on $M$ by diffeomorphisms
$g:M\to M$.
Then we can define $\Phi_g$ as the shift operator associated with  $g$:
 $$
  (\Phi_gu)(x)=u(g^{-1}(x)),
 $$
and obtain operators of the form \eqref{gop1},  see e.g.~\cite{AnLe1,AnLe2,NaSaSt17}).    
\end{remark}

\begin{remark}The operators of the form  $D+K$, where $D$ is as in \eqref{gop1} and $K$ is compact on $L^2(M)$, form an algebra, i.e., the sum  and the composition of $G$-operators is (up to compact operators) a $G$-operator as well. 
For the sum this is obvious, while for the composition it follows from  \eqref{almost1} and the fact that the conjugation 
$\Phi_g A \Phi_g^{-1}$ of a  pseudodifferential operator $A$ by a quantized canonical transformation $\Phi_g$ is a pseudodifferential operator whose symbol satisfies   
\begin{equation}\label{ego1}
  \sigma(\Phi_g A \Phi_g^{-1})(x,\xi)=\sigma(A)(g^{-1}(x,\xi)),\quad (x,\xi)\in T^*_0M,
\end{equation}
where $T^*_0M$ is the cotangent bundle of $M$ with the zero section deleted, $\sigma(A)$ is the symbol of $A$ and $g: T^*_0M\to T^*_0M$ is the canonical
transformation.
\end{remark}  
  
\paragraph{Symbols of $G$-operators.}  
  
The aim of the remaining part of this section is to define the symbol of $G$-operators and obtain a Fredholm theorem. 
For the case of infinite groups we use the terminology and methods of the theory of    $C^*$-algebras and their crossed products (see \cite{AnLe1}).  To apply this theory, we pass from  \eqref{rep1} to a unitary representation. 
\begin{proposition}\label{pp32}
Consider the almost representation \eqref{rep1}. Then the mapping
 \begin{equation}\label{rep3}
 \begin{array}{ccc}
   G &\longrightarrow & \mathcal{B}L^2(M) \vspace{2mm}\\
    g & \longmapsto & \widetilde{\Phi}_g = (\Phi_g\Phi_g^*)^{-1/2}\Phi_g,
 \end{array}
\end{equation}
which takes an operator to the unitary part in its polar decomposition, is a  unitary almost representation.  
Moreover, the operator $\widetilde{\Phi}_g$ is also a quantized canonical transformation for $g$.   
\end{proposition}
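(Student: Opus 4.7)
The plan is to verify the required properties of $\widetilde{\Phi}_g$ in three stages: that $\widetilde{\Phi}_g$ is a quantized canonical transformation for $g$; that the family is unitary modulo compact operators; and that the composition law $\widetilde{\Phi}_{gh}\equiv\widetilde{\Phi}_g\widetilde{\Phi}_h$ holds.

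First I would observe that the adjoint $\Phi_g^*$ is a quantized canonical transformation for $g^{-1}$ with principal symbol $\overline{\sigma(\Phi_g)}$, so $\Phi_g\Phi_g^*$ corresponds to the composition $g\circ g^{-1}=\mathrm{id}$ and is a pseudodifferential operator of order zero. It is self-adjoint and non-negative, and its principal symbol on $T^*_0M$ equals $|\sigma(\Phi_g)|^2$, which is nowhere zero because $\Phi_g$ is elliptic as an FIO (a consequence of $\Phi_g\Phi_{g^{-1}}\equiv I$ from \eqref{almost1}). Thus $\Phi_g\Phi_g^*$ is an elliptic non-negative pseudodifferential operator of order $0$ with finite-dimensional kernel. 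The operator $(\Phi_g\Phi_g^*)^{-1/2}$ is then defined by functional calculus on the orthogonal complement of the kernel (extended by $0$ there); modulo a finite-rank perturbation it is a positive elliptic pseudodifferential operator of order $0$ with principal symbol $|\sigma(\Phi_g)|^{-1}$.

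Since $\widetilde{\Phi}_g=(\Phi_g\Phi_g^*)^{-1/2}\Phi_g$ is the composition of a pseudodifferential operator with a QCT for $g$, it is itself a QCT for $g$, with principal symbol $\sigma(\Phi_g)/|\sigma(\Phi_g)|$ of modulus one. Unitarity modulo compacts follows by direct computation: functional calculus gives $\widetilde{\Phi}_g\widetilde{\Phi}_g^*=(\Phi_g\Phi_g^*)^{-1/2}(\Phi_g\Phi_g^*)(\Phi_g\Phi_g^*)^{-1/2}$, which is the orthogonal projection onto the closure of the range of $\Phi_g\Phi_g^*$ and hence differs from $I$ by the finite-rank projection onto $\ker(\Phi_g\Phi_g^*)$; similarly $\widetilde{\Phi}_g^*\widetilde{\Phi}_g=\Phi_g^*(\Phi_g\Phi_g^*)^{-1}\Phi_g$ is the projection onto $(\ker\Phi_g)^\perp$, which equals $I$ modulo finite rank because $\Phi_g$ is Fredholm.

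For the composition rule, both $\widetilde{\Phi}_{gh}$ and $\widetilde{\Phi}_g\widetilde{\Phi}_h$ are quantized canonical transformations for $gh$ whose principal symbols have modulus one. The hypothesis $\Phi_g\Phi_h\equiv\Phi_{gh}$ gives the identity of principal symbols $\sigma(\Phi_{gh})=\sigma(\Phi_g\Phi_h)$; the composition rule for QCT symbols is a pointwise product after the appropriate pullback by the canonical transformations, so dividing by moduli yields $\sigma(\widetilde{\Phi}_{gh})=\sigma(\widetilde{\Phi}_g\widetilde{\Phi}_h)$. Consequently $\widetilde{\Phi}_{gh}-\widetilde{\Phi}_g\widetilde{\Phi}_h$ is an FIO for $gh$ of order at most $-1$, and such operators map $L^2(M)$ continuously into $H^1(M)$ and are therefore compact on $L^2(M)$ by the Rellich embedding on the closed manifold $M$. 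The principal technical obstacle is the careful treatment of $(\Phi_g\Phi_g^*)^{-1/2}$ in the presence of a potentially nontrivial finite-dimensional kernel; all such ambiguities are absorbed into the equivalence $\equiv$, after which the remaining steps amount to standard principal-symbol bookkeeping together with the facts on QCTs recalled in the appendix.
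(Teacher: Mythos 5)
Your proof is correct and follows essentially the same route as the paper: everything is reduced to principal-symbol computations for the nonnegative elliptic $\psi$DO $\Phi_g\Phi_g^*$ together with multiplicativity of principal symbols under composition (you are in fact a bit more careful than the paper about the finite-dimensional kernel of $\Phi_g\Phi_g^*$ and about why order $-1$ remainders are compact). The only cosmetic difference is in the composition step: the paper factors both sides as a $\psi$DO times $\Phi_{g_1g_2}$ and compares the two $\psi$DO symbols via the Egorov relation \eqref{ego1}, whereas you compare the FIO principal symbols of $\widetilde{\Phi}_{gh}$ and $\widetilde{\Phi}_g\widetilde{\Phi}_h$ directly; the two arguments are equivalent.
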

For simplicity we write here $A^{-1}$ for an inverse of an operator $A$ modulo compact operators.
\begin{proof}
The composition $\Phi_g\Phi_g^*$ is associated with the identity canonical transformation and hence a pseudodifferential operator. 
Moreover, it is nonnegative. Therefore,   $(\Phi_g\Phi_g^*)^{-1/2}$ is well defined modulo compact operators and  a pseudodifferential operator.  
 
Obviously, $\widetilde{\Phi}_g$ is almost unitary.  Let us show that the mapping  \eqref{rep3}
is indeed an almost representation, i.e., 
\begin{equation}\label{q3}
 \widetilde\Phi_{g_1}\widetilde\Phi_{g_2}\equiv\widetilde\Phi_{g_1g_2} 
\end{equation}
modulo compact operators for $g_1,g_2\in G$. Indeed, the left-hand side in \eqref{q3} is equal to 
\begin{equation}\label{q3a}
 \widetilde\Phi_{g_1}\widetilde\Phi_{g_2}\equiv (\Phi_{g_1}\Phi_{g_1}^*)^{-1/2}\Phi_{g_1} (\Phi_{g_2}\Phi_{g_2}^*)^{-1/2} \Phi_{g_1}^{-1}
 \Phi_{g_1g_2}=: A\Phi_{g_1g_2},
\end{equation}
where, according to \eqref{ego1}, the symbol of the pseudodifferential operator $A$ is equal to 
$$
\sigma(A)=  \sigma(\Phi_{g_1}\Phi_{g_1}^*) ^{-1/2} g_1^{-1*}(\sigma(\Phi_{g_2}\Phi_{g_2}^*)^{-1/2}).
$$
On the right-hand side of \eqref{q3} 
\begin{equation}\label{q3b}
 \widetilde\Phi_{g_1g_2} \equiv (\Phi_{g_1}\Phi_{g_2}\Phi_{g_2}^*\Phi_{g_1}^*)^{-1/2}\Phi_{g_1} \Phi_{g_2}=: B\Phi_{g_1g_2},
\end{equation}
where the symbol of the pseudodifferential operator $B$ is equal to
$$
\sigma(B)=  \sigma(\Phi_{g_1}(\Phi_{g_2}\Phi_{g_2}^*)\Phi_{g_1}^{-1}\Phi_{g_1}\Phi_{g_1}^*)^{-1/2} =
(g_1^{-1*}(\sigma(\Phi_{g_2}\Phi_{g_2}^*))\sigma(\Phi_{g_1}\Phi_{g_1}^*))^{-1/2}.
$$
This implies that   $\sigma(A)=\sigma(B)$, so that 
 $A$ and $B$ are equal  modulo compact operators. Now \eqref{q3} follows from  \eqref{q3a} and  \eqref{q3b}, and the proof is complete.
%
\end{proof}

Since  $(\Phi_g\Phi_g^*)^{-1/2}$ is an elliptic pseudodifferential operator, the class of $G$-operators of the form \eqref{gop1} does not change,
if we replace  the operators $\Phi_g$ by the almost unitary operators  $\widetilde{\Phi}_g$. 
Using \eqref{rep3}, we therefore write an arbitrary $G$-operator as
\begin{equation}\label{gop2}
 D =\sum_g \widetilde D_g\widetilde\Phi_g:L^2(M)\longrightarrow L^2(M).
\end{equation}
\begin{definition}
The  {\em symbol} of the $G$-operator \eqref{gop2} is the element
\begin{equation}\label{symb2}
  \sigma(D)=\{\sigma(\widetilde D_g)\}\in C(S^*M)\rtimes  G
\end{equation} 
of the maximal $C^*$-crossed product (see e.g. \cite{Ped1}) of the algebra $C(S^*M)$ of continuous functions on the cosphere bundle   
  $S^*M=T^*_0M/\mathbb{R}_+$ of the manifold and the group $G$, acting on this algebra by automorphisms defined by canonical transformations
$g: S^*M\longrightarrow S^*M$.
We call $D$ elliptic, if its symbol is invertible. 
\end{definition}

\paragraph{The Fredholm theorem.}

\begin{theorem}\label{theo1}
An elliptic  $G$-operator 
$
  D:L^2(M) \longrightarrow L^2(M)
$  
is a Fredholm operator. \end{theorem}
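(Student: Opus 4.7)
The plan is to reduce the Fredholm property of $D$ to the invertibility of its symbol by constructing a $*$-homomorphism
\[
\pi_0:C(S^*M)\rtimes G\longrightarrow \mathcal{B}L^2(M)/\mathcal{K}(L^2(M))
\]
into the Calkin algebra that sends $\sigma(D)$ to the class $[D]$. Once this is available, invertibility of $\sigma(D)$ in $C(S^*M)\rtimes G$ immediately yields invertibility of $[D]$ in the Calkin algebra, which is exactly the Fredholm property of $D$.

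To build $\pi_0$, I would use the universal property of the maximal crossed product by exhibiting a covariant pair $(\alpha,u)$ of $(C(S^*M),G)$ in the Calkin algebra. For $\alpha$, send $f\in C(S^*M)$ to the class $[\mathrm{Op}(f)]$, where $\mathrm{Op}(f)$ is any zero order classical pseudodifferential operator with principal symbol $f$; this is well defined modulo compacts because order $-1$ classical pseudodifferential operators on a closed manifold are compact on $L^2$, and extends continuously to a $*$-homomorphism on all of $C(S^*M)$ by the standard $L^2$-boundedness estimate $\|[\mathrm{Op}(f)]\|\le\|f\|_\infty$ together with the multiplicativity of the principal symbol. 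For $u$, set $u(g):=[\widetilde\Phi_g]$; by Proposition~\ref{pp32} this is a unitary representation of $G$ in the Calkin algebra. Covariance $u(g)\,\alpha(f)\,u(g)^{*}=\alpha(f\circ g^{-1})$ is precisely the Egorov identity \eqref{ego1} applied to $\widetilde\Phi_g\,\mathrm{Op}(f)\,\widetilde\Phi_g^{-1}$. The universal property of $C(S^*M)\rtimes_{\max} G$ then produces the required $\pi_0$.

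Evaluating on a finite sum, $\pi_0(\sigma(D))=\sum_g[\mathrm{Op}(\sigma(\widetilde D_g))]\,[\widetilde\Phi_g]=[D]$, since $\mathrm{Op}(\sigma(\widetilde D_g))-\widetilde D_g$ is of order $-1$, hence compact on $L^2(M)$. Consequently, if $\sigma(D)$ is invertible in $C(S^*M)\rtimes G$, then $[D]=\pi_0(\sigma(D))$ is invertible in $\mathcal{B}L^2(M)/\mathcal{K}(L^2(M))$, so that $D$ has a two-sided parametrix modulo compact operators and is Fredholm.

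The main technical points to verify carefully are the well-definedness and norm continuity of $\alpha$ (via $L^2$-boundedness of zero order pseudodifferential operators) and the covariance relation, which uses the almost-unitarity of the $\widetilde\Phi_g$ from Proposition~\ref{pp32} so that Egorov's formula \eqref{ego1} applies on the nose modulo compacts. Everything else is formal bookkeeping with the universal property of the maximal crossed product; the genuinely delicate analysis (wavefront estimates for operators $\int_G D_g\Phi_g\,dg$) will only be needed for the Lie group version in Section~3.
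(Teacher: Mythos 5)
Your proposal is correct and follows essentially the same route as the paper: both construct a homomorphism from the maximal crossed product $C(S^*M)\rtimes G$ into the Calkin algebra via the covariant pair $f\mapsto[\mathrm{Op}(f)]$, $g\mapsto[\widetilde\Phi_g]$, with covariance supplied by Egorov's relation \eqref{ego1}, and then read off the Fredholm property from invertibility of the symbol. The only cosmetic difference is that the paper first embeds the Calkin algebra into $\mathcal{B}H$ by Gelfand--Naimark so as to invoke the universal property for covariant representations on a Hilbert space, whereas you apply it directly to the covariant pair in the Calkin algebra.
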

\begin{proof}
1. Consider the  diagram
\begin{equation}\label{diag5}
 \xymatrix{
  C_c(G,\Psi(M)) \ar[d]\ar[r]^\sigma  & C(S^*M)\rtimes G \ar[d]^Q \\
  \mathcal{B}L^2(M) \ar[r]_\pi & \mathcal{B}L^2(M) /\mathcal{K}
  }
\end{equation}
with the ideal $\mathcal K$ of compact operators. Here, the left vertical mapping 
$$
  \{\widetilde D_g\}_{g\in G}\in C_c(G,\Psi(M))\quad\longmapsto\quad  D=\sum_g \widetilde D_g\widetilde \Phi_g: L^2(M)\longrightarrow L^2(M),
$$  
takes a finite collection of operators to the corresponding   $G$-operator, while 
$\sigma$ takes a collection of operators $\{\widetilde D_g\}_{g\in G}$ 
to the collection of the corresponding symbols $\{\sigma(\widetilde D_g)\}_{g\in G}$ and 
$\pi$ denotes the projection onto the Calkin algebra $\mathcal{B}L^2(M) /\mathcal{K}$. Finally, the mapping $Q$ in \eqref{diag5}
is defined on the dense set $\im\sigma$ by 
$$
 Q(\{\sigma(\widetilde D_g)\})=\pi\Bigl(\sum_g \widetilde D_g\widetilde \Phi_g\Bigr).
$$
We claim that $Q$ extends by continuity to the entire crossed product, so that the diagram becomes commutative.
Indeed, by the Gelfand--Naimark theorem, the Calkin algebra  $\mathcal{B}L^2(M) /\mathcal{K}$ can be realized 
as a subalgebra of the bounded operators on a Hilbert space  $H$. In other words, there exists a unital monomorphism of $C^*$-algebras
$$
 i:\mathcal{B}L^2(M) /\mathcal{K}\longrightarrow \mathcal{B}H.
$$
Consider the covariant representation on  $H$ of the $C^*$-dynamical system $(C(S^*M),G)$, which maps an element
$a\in C(S^*M)$ to the operator
$$
 i(a)=i(\pi(A))\in  \mathcal{B}H,
$$
where $A$ is a pseudodifferential operator with symbol   $a$, while to $g\in G$  we assign the unitary operator
$$
 U_g=i(\widetilde\Phi_g)\in  \mathcal{B}H.
$$
It follows from \eqref{ego1} that $$
 U_g i(a ) U_{g}^{-1}= i(g^{-1*}a),
$$
so that the representation is indeed covariant, see e.g.~\cite{Ped1}.

The properties of the maximal crossed product imply that the mapping  
$$
 \{a_g\}_{g\in G}\in C_c(G,C(S^*M)) \longmapsto \sum_g i(a_g)U_g \in \mathcal{B}H,
$$
originally defined for compactly supported functions on the group, extends by continuity to the entire maximal crossed product  $C(S^*M)\rtimes G$. Moreover, it coincides with  $iQ$. This together with the fact that $i$ is a monomorphism, shows that 
  $Q$ is well defined on the maximal crossed product.  


2. If $D$ be an elliptic  $G$-operator, then there exists an inverse   $\sigma(D)^{-1}$ to the symbol of $D$. 
In view of the commutativity of the diagram, we can define the almost-inverse operator $D^{-1}$ by 
$$
 D^{-1}=\pi^{-1}(Q(\sigma(D)^{-1})).
$$
Equivalences $DD^{-1}\equiv 1$, $D^{-1}D\equiv 1$ are easily proved using \eqref{diag5}.

The proof to the theorem is now complete.
\end{proof}

\paragraph{The trajectory symbol.}

The ellipticity condition for $G$-operators in Theorem~\ref{theo1} requires the invertibility of the symbol as an element
of the crossed product and is a condition, which is difficult to check in practice, since the structure of   
crossed products can be quite complicated.  However (under very mild assumptions), this condition can be 
reformulated more explicitly in terms of the so called  
{\em trajectory symbol}.

The trajectory symbol of a $G$-operator, cf. \cite{AnLe2}, is -- just as for  usual pseudodifferential operators -- a function on the cotangent
bundle. However, the trajectory symbol is an operator  acting on the space of functions on the trajectory of the corresponding point.  If we identify 
the trajectory with the group, then the trajectory symbol is given as an operator on $l^2(G)$. 
The trajectory symbol of a pseudodifferential operator is just the operator of multiplication by the value of the symbol of the pseudodifferential
operator at the corresponding point of the orbit, while the symbol of operator   $\widetilde\Phi_g$ is a shift operator on the group. A direct computation shows that the  {\em trajectory symbol}
$$
\sigma(D)(x,\xi):l^2(G)\longrightarrow l^2(G)
$$
of the operator \eqref{gop2} is equal to
\begin{equation}\label{traj1}
 \bigl[\sigma(D)(x,\xi) u\bigr](h)=\sum_g \sigma(\widetilde D_g)(h(x,\xi)) u(g^{-1}h).  
\end{equation}
\begin{remark}
Note that $\sigma(D)(x,\xi)$ is equal to the regular representation at the point $(x,\xi)\in S^*M$ (see \cite{Ped1}, Section 7.7.1) 
of the crossed product $C(S^*M)\rtimes G$  on
$l^2(G)$  and corresponds to the covariant representation
$$
a\in C(S^*M) \longmapsto  a(h(x,\xi)):l^2(G)\longrightarrow l^2(G),
$$
$$
 g\in G \longmapsto L_g u(h)=u(g^{-1}h):l^2(G)\longrightarrow l^2(G).
$$ 
\end{remark}

The results in  \cite{AnLe1},  Theorem 21.2 for an amenable group and \cite{Exe1} in the general case enable us to give the following corollary.
\begin{corollary}
 Suppose that $G$ acts on $S^*M$ amenably,   then the invertibility of the symbol
 $$
   \sigma(D)\in  C(S^*M)\rtimes  G 
 $$
 is equivalent to the invertibility of all the trajectory symbols
 $$
   \sigma(D)(x,\xi):l^2(G)\longrightarrow l^2(G),\qquad (x,\xi)\in S^*M.
 $$ 
\end{corollary}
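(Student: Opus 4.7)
The plan is to deduce the corollary from two standard $C^*$-algebraic facts: first, that under amenability of the action the maximal and reduced crossed products coincide, and second, that the reduced crossed product $C(S^*M)\rtimes_r G$ admits a faithful family of representations given precisely by the trajectory (regular) representations at points of $S^*M$. Together these imply that an element of $C(S^*M)\rtimes G$ is invertible if and only if its image under every trajectory representation is.

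First I would recall that by Anantharaman-Delaroche \cite{AnLe1}, Theorem 21.2 (for amenable discrete groups) and by Exel \cite{Exe1} (for an amenable action of a general discrete group), amenability of the $G$-action on $S^*M$ forces the canonical surjection
\begin{equation*}
C(S^*M)\rtimes G\longrightarrow C(S^*M)\rtimes_r G
\end{equation*}
to be an isomorphism. Hence it suffices to characterize invertibility of $\sigma(D)$ in the reduced crossed product.

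Next I would show that the family of trajectory representations
\begin{equation*}
\pi_{(x,\xi)}:C(S^*M)\rtimes_r G \longrightarrow \mathcal{B}(l^2(G)),\qquad (x,\xi)\in S^*M,
\end{equation*}
given by the covariant pair $a\mapsto (u\mapsto (h\mapsto a(h(x,\xi))u(h)))$ and $g\mapsto L_g$, as described in the remark preceding the corollary, is jointly faithful. This is standard for the reduced crossed product with commutative coefficient algebra: the reduced norm is by construction the supremum of the norms $\|\pi_{(x,\xi)}(\cdot)\|$ over $(x,\xi)\in S^*M$ (or equivalently over one representative from each orbit), and hence $\bigoplus_{(x,\xi)}\pi_{(x,\xi)}$ is an isometric embedding of $C(S^*M)\rtimes_r G$ into $\mathcal{B}\bigl(\bigoplus_{(x,\xi)} l^2(G)\bigr)$.

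Finally, since a faithful $*$-homomorphism of unital $C^*$-algebras reflects invertibility, $\sigma(D)$ is invertible in $C(S^*M)\rtimes_r G$ if and only if $\pi_{(x,\xi)}(\sigma(D))=\sigma(D)(x,\xi)$ is invertible in $\mathcal{B}(l^2(G))$ for every $(x,\xi)\in S^*M$, with uniform control of the norms of the inverses provided automatically by the faithful representation (this uniformity is the only subtle point, and it is what actually requires the full crossed product machinery rather than a pointwise argument). Combined with the first step, this yields the equivalence claimed in the corollary. The main obstacle in this plan is not computational but conceptual: one must correctly invoke the amenability result to pass from the maximal to the reduced crossed product, since without amenability the trajectory symbols only control the reduced norm and may fail to detect invertibility in $C(S^*M)\rtimes G$.
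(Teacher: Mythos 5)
Your overall framework --- pass from the maximal to the reduced crossed product via amenability, then test invertibility against the regular representations at points of $S^*M$ --- is the right one, and the first step is fine. But the second half has a genuine gap. Joint faithfulness of the family $\{\pi_{(x,\xi)}\}$ gives you an isometric embedding $\pi=\bigoplus_{(x,\xi)}\pi_{(x,\xi)}$, and spectral permanence then tells you that $\sigma(D)$ is invertible if and only if the single operator $\pi(\sigma(D))$ is invertible on $\bigoplus_{(x,\xi)} l^2(G)$. That is strictly stronger than invertibility of each summand $\pi_{(x,\xi)}(\sigma(D))$: a direct sum of invertible operators is invertible only if the norms of the inverses are uniformly bounded. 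Your parenthetical claim that this uniformity is ``provided automatically by the faithful representation'' is backwards --- faithfulness yields the uniform bound only once you already know $\sigma(D)$ is invertible, which is the easy direction. For the hard direction (pointwise invertibility of all trajectory symbols implies invertibility of $\sigma(D)$) faithfulness alone proves nothing; compare $C_b(\mathbb{N})$, where the point evaluations are jointly faithful and the element $(1/n)_{n}$ is invertible in every evaluation but not invertible in the algebra.

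The missing ingredient is precisely the result the paper leans on: Theorem 21.2 of \cite{AnLe1} for amenable groups and, in the general case, Exel's theorem \cite{Exe1} on invertibility in groupoid $C^*$-algebras. These show that the regular representations at points do detect invertibility, roughly because under amenability every irreducible representation of the crossed product is weakly contained in some $\pi_{(x,\xi)}$ (a generalized Effros--Hahn statement), so the trajectory representations form a family that is exhausting in the sense required for the spectral argument, not merely jointly faithful. You cite exactly these references, but only to identify the maximal and reduced crossed products; in the paper they carry the entire weight of the corollary, whose ``proof'' there is in fact nothing more than that citation. To close your argument you would need to invoke them (or the Nistor--Prudhon exhausting-families machinery) for the invertibility statement itself, not just for the coincidence of the two crossed-product completions.
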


\section{$G$-operators for Lie Groups}
 
\paragraph{Definition of $G$-operators.}
  
In the sequel we shall denote by $G$ a finite-dimensional, not necessarily compact, Lie group, 
endowed with a left-invariant Haar measure $dg$. 
We write  $\Psi(M)$ for the $C^*$-closure of the algebra of all pseudodifferential operators of order zero on $M$. Recall that the operators in $\Psi(M)$ have symbols in $C(S^*M)$. 
 
Suppose that we are  given a 
unitary representation\footnote{Note that, unlike the situation in the previous section, 
we assume from the start that we are given a unitary representation and not just an ``almost-unitary almost-representation''.}
\begin{equation}\label{rep1q}
 \begin{array}{ccc}
  \rho: G &\longrightarrow & \mathcal{B} L^2(M)\\
    g & \longmapsto & \Phi_g 
 \end{array}
\end{equation} 
on $L^2(M)$  by quantized canonical transformations, see e.g.\  \cite{Dui1,Hor4,MSS1,NOSS1}.
Necessary facts about quantized canonical transformations are recalled in the appendix.  

The representation \eqref{rep1q} is almost never continuous in norm.
Instead, we shall assume that  $\rho$ is continuous in the following sense
\begin{enumerate}
 \item[1)] We are given a smooth action
$$
 \begin{array}{ccc}
   G\times T^*_0M & \longrightarrow & T^*_0 M\\
   (g,m)  & \longmapsto  & g(m)
 \end{array}
$$
of  $G$ on $T^*_0M$ by homogeneous canonical transformations and a smooth family of amplitudes  
$$
 a=\{a_g\}_{g\in G}\in C^\infty_c(G\times S^*M);
$$
\item[2)] We have a decomposition
\begin{equation}\label{deco6}
  \Phi_g=\Phi(g,a_g)+K_g,
\end{equation}
where $\Phi(g,a_g)$ is a fixed quantization of  $g$ and $a_g$,   
while $K_g$ is a norm continuous family of compact operators. For details see the appendix.  
\end{enumerate}

\begin{proposition}\label{pr1}
Let $\{D_g\}_{g\in G}\in C_c(G,\Psi(M))$ be a compactly supported norm continuous function on $G$ with values in the  $C^*$-algebra  $\Psi(M)$.
Then the $G$-operator
 \begin{equation}\label{gop1q}
 D =\int_G  D_g\Phi_g dg:L^2(M)\longrightarrow L^2(M)
\end{equation}
is well-defined  and bounded.
\end{proposition}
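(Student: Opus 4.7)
The plan is to check that $\int_G D_g\Phi_g\,dg$ makes sense as a Bochner integral in the strong operator topology and to obtain the bound $\|D\|\le\int_G\|D_g\|\,dg$ from the unitarity of the $\Phi_g$. Since each $\Phi_g$ is unitary, $\|D_g\Phi_g\|\le\|D_g\|$, and the norm continuity together with the compact support of $g\mapsto D_g$ makes the scalar integral $C:=\int_G\|D_g\|\,dg$ finite; this will be the candidate bound.

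For well-definedness I would fix $u\in L^2(M)$ and show that $g\mapsto D_g\Phi_g u$ is continuous from $G$ into $L^2(M)$. Using the decomposition $\Phi_g=\Phi(g,a_g)+K_g$ in \eqref{deco6}, the norm continuity of the compact family $K_g$ already yields continuity of $g\mapsto K_g u$; for the principal part I would first establish strong continuity of $g\mapsto \Phi(g,a_g)u$ on a dense subspace such as $C^\infty(M)$, invoking the smooth dependence of the canonical transformation and amplitude $a_g$ on $g$ together with the parameter-dependent continuity results for oscillatory integrals summarized in the appendix, and then extend to all of $L^2(M)$ using uniform boundedness of $\Phi(g,a_g)$ on compact subsets of $G$. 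Combined with the norm continuity of $g\mapsto D_g$, this gives continuity of the integrand, whose support lies in the compact set where $D_g\neq 0$; hence the vector-valued Bochner integral $Du:=\int_G D_g\Phi_g u\,dg$ exists in $L^2(M)$. Linearity in $u$ is immediate, and the standard Bochner inequality produces $\|Du\|\le C\|u\|$, so $D\in\mathcal{B}L^2(M)$ with the claimed norm bound.

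The main obstacle is the strong continuity of $g\mapsto\Phi(g,a_g)u$: unlike for the compact remainder, norm continuity of this family cannot be expected, and the result has to be extracted from the oscillatory integral representation of $\Phi(g,a_g)$ whose phase function (built from the graph of the canonical transformation $g$) and amplitude $a_g$ both depend smoothly on $g\in G$. This is exactly the kind of parameter-dependent continuity statement for oscillatory integrals that the appendix is set up to supply; once invoked, the remainder of the argument is essentially a routine Bochner-integral computation.
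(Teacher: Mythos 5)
Your proposal is correct and follows essentially the same route as the paper: establish strong continuity of $g\mapsto\Phi_g u$ (via the decomposition \eqref{deco6}, reducing to smooth $u$ and the regularized oscillatory integral for $\Phi(g,a_g)$), then define $Du$ as a convergent vector-valued integral and estimate $\|Du\|\le\bigl(\int_G\|D_g\|\,dg\bigr)\|u\|$. The only minor discrepancy is that you credit the continuity of the oscillatory integral in the parameter $g$ to results in the appendix, whereas the appendix only treats wave front sets; the paper instead argues this directly by regularizing to an absolutely convergent integral with integrand continuous in $g$.
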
 

\begin{proof}
1. Let us show that the homomorphism \eqref{rep1q} is strongly continuous, i.e., given $u\in L^2(M)$, 
the function $g\mapsto \Phi_g u$ is norm continuous. Indeed, it suffices to prove this statement for an arbitrary smooth 
function  
$u$ and an operator family $\Phi(g,a_g)$ as in \eqref{deco6}. 
In this case  $\Phi(g,a_g)u$ is defined in local coordinates by an oscillatory integral, which is reduced by a regularization to an 
  absolutely convergent integral. The integrand in the latter integral is continuous in $g$ and the integral is uniformly convergent,
  this implies that the integral is continuous in   $g$. This proves the strong continuity of
  the operator family  $\Phi_g$.

2. Now the action of  $D$ on a function $u\in L^2(M)$  is defined  by the integral
 $$
  Du =\int_G D_g(\Phi_gu)dg,
 $$
 which is norm-convergent in view of the strong continuity of \eqref{rep1q} established above.
The norm of the resulting operator is easily estimated: 
 $$
   \|Du\|\le \int_G \left\|D_g\right\|\| \Phi_g u\|   dg\le \Bigl(\int_G \left\|D_g \right\|   d g\Bigr)\|u\| =C\|u\|.
 $$
 The proof of the proposition is now complete.
\end{proof}

Now that the $G$-operator \eqref{gop1q} is well defined, there naturally arises the problem 
of associating a symbol to such operators and to prove the Fredholm property for elliptic elements. 
It turns out, however,  that such an operator is never Fredholm for a nontrivial group action. In fact,  simple  examples show that the point here is that $G$-operators are smoothing along the orbits of the group
action, see \cite{Ster20} for the case of Lie group actions on the manifold $M$. 
One therefore has to introduce the notion of 
symbol on the space transversal to the orbits, which we call the transverse cotangent space.

\paragraph{The transverse cotangent space and the symbol.}

Denote by $\mathcal G$ the Lie algebra of $G$.
An element  $H\in\mathcal{G}$ defines the Hamiltonian vector field  
$$
V_H=\frac{d}{dt}\Biggr|_{t=0}(\exp tH)^*: C^\infty(T^*_0M) \longrightarrow C^\infty(T^*_0M)
$$ 
on  $T^*_0M$. 
Let $H(x,p)$ be the Hamiltonian function which is homogeneous of degree one in $p$ and generates this vector field.
 
With the action of $G$ on  $T^*_0M$ by homogeneous canonical transformations we   then associate  the following 
closed conical subset of $T^*_0M$:
\begin{equation}\label{tg1}
 T^*_GM=\{(x,p)\in T^*_0M \; |\; H(x,p)=0 \text{ for all } H\in \mathcal{G}\},
\end{equation}
the common zero set of all Hamiltonians generating the group action. We call 
$T^*_GM$ the 
  {\em transverse cotangent space} by analogy with the case of classical transversally elliptic theory \cite{Ati8,Sin2}, where the action is induced by a  group action on  $M$.

\begin{proposition} 
 The subspace $T^*_GM\subset T^*_0M$ has the following properties:
\begin{enumerate}
 \item[1)] In terms of the Hamiltonian vector fields $V_H\in\Vect(T^*_0M)$ it is defined as
\begin{equation}\label{tg1a} 
  T^*_GM=\Bigl\{(x,p)\in T^*_0M \;|\; \omega(\widetilde p,V_H(x,p))=0\quad 
\text{for all  $H\in\mathcal{G} $}\Bigr\},
\end{equation} 
where    $\omega$ denotes the symplectic form on $T^*M$,  and given $p\in T^*_xM$ we define the element $\widetilde p\in
T_{(x,p)}(T^*M)$ equal to a tangent vector to the line $\mathbb{R}p\subset T^*M$.
 \item[2)] The set $T^*_GM\subset T^*_0M$ is  $G$-invariant.
 \item[3)] If the action on $T^*_0M$ is induced by an action on 
 $M$, then the subspace $T^*_GM$ consists of covectors, which vanish on the tangent space to the orbit of $G$ passing through this point and coincides 
 with the space introduced in   {\em \cite{Ati8,Sin2}}.
\end{enumerate}
\end{proposition}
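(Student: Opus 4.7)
For part (1), the plan is a one-line computation using Euler's identity. Pick local Darboux coordinates $(x,p)$ in which $\omega=\sum dp_i\wedge dx_i$, so that the tangent vector $\widetilde p$ to the ray $\mathbb{R}p\subset T^*_xM$ is the radial field $\sum p_i\,\partial/\partial p_i$, and the Hamiltonian vector field of $H$ is
\[
V_H=\sum\Bigl(\frac{\partial H}{\partial p_i}\,\frac{\partial}{\partial x_i}-\frac{\partial H}{\partial x_i}\,\frac{\partial}{\partial p_i}\Bigr).
\]
Then $\omega(\widetilde p,V_H)=\sum p_i\,\partial H/\partial p_i$, which by Euler's identity equals $H(x,p)$ because $H$ is homogeneous of degree one in $p$. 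Thus the defining condition $H(x,p)=0$ in \eqref{tg1} is pointwise equivalent to $\omega(\widetilde p,V_H(x,p))=0$, giving \eqref{tg1a}. I would remark that the expression $\omega(\widetilde p,V_H)$ is independent of the choice of representative of $\widetilde p$ once the degree-one homogeneity is used.

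For part (2), the plan is to combine two facts: that each $g\in G$ acts by a homogeneous canonical transformation on $T^*_0M$, and that $\mathcal G$ is closed under the adjoint action. Concretely, for $H\in\mathcal G$ the flow of $V_H$ on $T^*_0M$ is $\exp(tH)$; conjugating gives
\[
g^{-1}\circ\exp(tH)\circ g=\exp(t\,\mathrm{Ad}(g^{-1})H),
\]
whose infinitesimal generator is $V_{\mathrm{Ad}(g^{-1})H}$. Since $g$ is a canonical transformation preserving the $\mathbb{R}_+$-action, the pulled-back Hamiltonian $g^*H=H\circ g$ is again homogeneous of degree one in $p$ and generates the conjugated flow; by uniqueness of homogeneous Hamiltonians one has $H\circ g=\mathrm{Ad}(g^{-1})H$ on $T^*_0M$. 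Now if $(x,p)\in T^*_GM$ and $H\in\mathcal G$, then
\[
H(g(x,p))=(\mathrm{Ad}(g^{-1})H)(x,p)=0
\]
because $\mathrm{Ad}(g^{-1})H\in\mathcal G$. Hence $g(x,p)\in T^*_GM$.

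For part (3), the plan is to unpack the special form of the lifted action. If the action on $M$ is $g:M\to M$, its canonical lift to $T^*_0M$ takes $(x,p)$ to $(g(x),(dg_x)^{-*}p)$. For $H\in\mathcal G$ the vector field on $M$ is $X_H(x)=\frac{d}{dt}\big|_{t=0}\exp(tH)\cdot x$, and the corresponding homogeneous-of-degree-one Hamiltonian on $T^*_0M$ is
\[
H(x,p)=\langle p,X_H(x)\rangle,
\]
as one checks by applying $\frac{d}{dt}\big|_{t=0}(\exp tH)^*$ to test functions pulled up from $M$. Therefore $(x,p)\in T^*_GM$ iff $p$ annihilates every $X_H(x)$, i.e. iff $p$ vanishes on $T_x(Gx)$, which is the classical definition of the transverse cotangent space from \cite{Ati8,Sin2}.

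The main obstacle is the identification $H\circ g=\mathrm{Ad}(g^{-1})H$ used in step (2): one needs the homogeneity of $H$ in $p$ together with the fact that $g$ commutes with the $\mathbb{R}_+$-action to conclude that two Hamiltonians generating the same flow agree (rather than just differing by a locally constant function), and this is where the hypothesis that $G$ acts by \emph{homogeneous} canonical transformations is genuinely used.
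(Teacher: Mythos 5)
Your proof is correct, and parts 1) and 3) follow the paper's own computation exactly: Euler's identity $pH'_p=H$ for the equivalence of \eqref{tg1} and \eqref{tg1a}, and the identification $\omega(\widetilde p,V_H)=p\,(V_H)_x=\langle p,X_H(x)\rangle$ for the lifted-action case. For part 2) the paper merely asserts that invariance ``follows from \eqref{tg1a}''; your argument via $H\circ g=\mathrm{Ad}(g^{-1})H$ (using homogeneity to rule out a locally constant discrepancy) and the closure of $\mathcal{G}$ under the adjoint action supplies exactly the details the paper omits, and is a valid, more explicit justification.
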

\begin{proof}
 
1.  Let us choose local canonical coordinates $x,p$ on $T^*M$. Then $\omega=dp\wedge dx$, the Hamiltonian vector field with Hamiltonian $H(x,p)$ 
is equal to
$$
V_H= H'_p(x,p) \frac{\partial}{\partial x}-H'_x(x,p) \frac{\partial}{\partial p},
$$ 
and $\widetilde p=p \partial/\partial p$. Substituting these expressions in   $\omega$, we obtain the 
relations
$$
  \omega(\widetilde{p}, V_H)=\omega\left(p \frac\partial{\partial p},H'_p(x,p) \frac{\partial}{\partial x}-H'_x(x,p) \frac{\partial}{\partial p} \right)= pH'_p(x,p)=H(x,p),
$$
which immediately imply the desired equality of the sets  \eqref{tg1} and  \eqref{tg1a}.

2. $G$-invariance of $T^*_GM$ follows from  \eqref{tg1a}.

3. Equation \eqref{tg1a} implies that $\omega(\widetilde{p}, V_H)=  p (V_H)_x $. Hence, the condition $\omega(\widetilde{p}, V_H)=0$
is equivalent to the statement that 
$p$ vanishes on the tangent vectors $(V_H)_x$ for all $H\in \mathcal{G}$.
\end{proof}

Denote by $S^*_GM=T^*_GM/\mathbb{R}_+$ the cosphere space.

\paragraph{The Fredholm theorem.}
\begin{definition}
 {\em The symbol of the $G$-operator} \eqref{gop1q} is the element of the maximal crossed product 
 $$
   \sigma(D)=\{\sigma(D_g)\}\in C(S^*_GM)\rtimes G.
 $$ 
\end{definition}
This crossed product is a nonunital algebra  and therefore does not contain invertible elements. 
To formulate the Fredholm theorem, we adjoin a unit. 
 
\begin{theorem}
 Suppose that the $G$-operator
 $$
  1+D:L^2(M)\longrightarrow L^2(M)
 $$
 is elliptic, i.e.,  its symbol
 $$
   1+\sigma(D)\in (C(S^*_GM)\rtimes  G)^+
 $$
is invertible in the crossed product with adjoint unit. Then $1+D$ is a Fredholm operator.  
\end{theorem}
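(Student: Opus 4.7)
The plan is to parallel the proof of Theorem~\ref{theo1}. The goal is to construct a $*$-homomorphism
\[
Q:(C(S^*_GM)\rtimes G)^+\longrightarrow \mathcal{B}L^2(M)/\mathcal K
\]
making the analogue of diagram~\eqref{diag5} commute, with the left vertical map sending $\{D_g\}\in C_c(G,\Psi(M))$ to the integral $\int_G D_g\Phi_g\,dg$ and the top map sending it to $\{\sigma(D_g)|_{S^*_GM}\}$. Once $Q$ exists, the Fredholm property of an elliptic $1+D$ follows immediately: an inverse of $1+\sigma(D)$ in the unitization pushes through $Q$ to a lift in $\mathcal BL^2(M)$ inverting $1+D$ modulo compacts, exactly as at the end of the proof of Theorem~\ref{theo1}.

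I would first construct an intermediate homomorphism $Q_0:C(S^*M)\rtimes G\to\mathcal BL^2(M)/\mathcal K$ by the \emph{same} covariant-representation recipe used in Theorem~\ref{theo1}. Embed the Calkin algebra into some $\mathcal BH$ via Gelfand--Naimark, assign to $a\in C(S^*M)$ the element $i(\pi(A))$ for any $A\in\Psi(M)$ with $\sigma(A)=a$, and set $U_g=i(\Phi_g)$; these $U_g$ are already unitary by hypothesis, Egorov's identity~\eqref{ego1} yields the covariance, and the strong continuity of $\rho$ established in the proof of Proposition~\ref{pr1} supplies the continuity required to apply the universal property of the maximal crossed product. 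Then $Q_0$ must descend along the restriction epimorphism $C(S^*M)\rtimes G\twoheadrightarrow C(S^*_GM)\rtimes G$, whose kernel is the crossed-product ideal generated by continuous functions vanishing on the $G$-invariant subset $S^*_GM\subset S^*M$.

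The decisive step is therefore to prove that
\[
\{D_g\}\in C_c(G,\Psi(M)),\ \sigma(D_g)|_{T^*_GM}=0\text{ for all }g\quad\Longrightarrow\quad\int_G D_g\Phi_g\,dg\in\mathcal K,
\]
and this is the main obstacle. It is precisely the content of the wavefront-set analysis carried out in Section~4: the Schwartz kernel of the integral is a $g$-superposition of Lagrangian distributions associated with the canonical relations of the $\Phi_g$, and stationary phase in the group variable concentrates its wavefront set over the common zero locus of the Hamiltonian functions generating the $G$-action, which is precisely $T^*_GM$. When the amplitudes $\sigma(D_g)$ vanish on $T^*_GM$, the leading stationary-phase contribution vanishes, the kernel is smoothing, and the operator is compact. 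Granting this result of Section~4, $Q_0$ factors through the quotient to yield the desired $Q$ (passing to the unitization is routine), and the theorem follows.
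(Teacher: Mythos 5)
Your overall architecture coincides with the paper's: construct a homomorphism $Q_1$ on $C(S^*M)\rtimes G$, show it annihilates the ideal $C_0(S^*M\setminus S^*_GM)\rtimes G$ via the wave front set computation of Section~4 (Proposition~\ref{pwf1}), descend to $C(S^*_GM)\rtimes G$, and push the inverse symbol through the resulting $Q$. The decisive ingredient is correctly identified. There is, however, a genuine gap in your construction of the intermediate homomorphism: you transplant the covariant-representation recipe of Theorem~\ref{theo1} verbatim, embedding the Calkin algebra into $\mathcal{B}H$, setting $U_g=i(\pi(\Phi_g))$, and invoking the universal property of the maximal crossed product on the grounds that ``the strong continuity of $\rho$ supplies the continuity required.'' For a locally compact group $G$, a covariant representation of the $C^*$-dynamical system $(C(S^*M),G)$ requires $g\mapsto U_g$ to be a \emph{strongly continuous} unitary representation on $H$; without this the integrated form is not controlled by the universal norm. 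Strong continuity of $g\mapsto\Phi_g$ on $L^2(M)$ does not survive the Gelfand--Naimark embedding $i$: the map $g\mapsto\pi(\Phi_g)$ into the Calkin algebra is in general not norm continuous (already for translations on the circle the essential norm of $\Phi_g-1$ stays bounded away from zero for $g\neq e$), and there is no reason for $i(\pi(\Phi_g))$ to be strongly continuous on the abstract representation space $H$. This is precisely the point at which the Lie group case diverges from the discrete case, where no continuity condition is needed.

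The paper circumvents this (proof of Lemma~\ref{lem1}) by keeping the genuine covariant representation on $L^2(M)$ itself, where $\rho$ \emph{is} strongly continuous (step~1 of the proof of Proposition~\ref{pr1}), but at the level of the operator algebra $\Psi(M)$ rather than the symbol algebra: Proposition~\ref{pr2} shows that the conjugation action $A\mapsto\Phi_gA\Phi_g^*$ is point-norm continuous, so $\Psi(M)\rtimes G$ is a well-defined maximal crossed product represented on $L^2(M)$; exactness of the maximal crossed product functor identifies $C(S^*M)\rtimes G$ with the quotient $(\Psi(M)\rtimes G)/(\mathcal K\rtimes G)$, and $Q_1$ is obtained by lifting along $\sigma$ and passing to the Calkin algebra. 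You would need this detour (or an equivalent device) for your $Q_0$ to exist. A secondary imprecision: vanishing of the principal symbols on $T^*_GM$ does not by itself make $\int_G D_g\Phi_g\,dg$ compact; the paper establishes compactness on the dense set of families whose \emph{complete} symbols vanish in a neighbourhood of $S^*_GM$ and then extends by continuity of $Q_1$ --- which again presupposes that $Q_1$ has already been constructed as a bounded homomorphism on the whole crossed product.
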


\begin{proof}
1. The proof of this theorem is based on the following commutative diagram
\begin{equation}\label{diag7}
 \xymatrix{
  C_c(G, C(S^*M)) \ar[dr]_{Q_0} \ar[r] & C(S^*M)\rtimes G \ar[d]^{Q_1} \ar[r] &  C(S^*_G M)\rtimes G \ar[dl]^Q\\
   & \mathcal{B}L^2(M)/\mathcal{K}
 }
\end{equation}
The horizontal arrows in the diagram are given by the natural embedding
of the set of compactly supported functions on the group into the crossed product and the restriction to the 
$G$-invariant subspace $S^*_GM\subset S^*M$, respectively. The mapping 
\begin{equation}\label{muk4}
 Q_0:C_c(G, C(S^*M))\to \mathcal{B}L^2(M)/\mathcal{K}
\end{equation}
takes a collection of symbols  $\{\sigma(D_g)\}_{g\in G}$ to the corresponding 
$G$-operator \eqref{gop1q} defined uniquely up to compact operators. 
The existence of the vertical mapping $Q_1$  is guaranteed by Lemma 1, below,  which will be proven in the next section.
\begin{lemma}\label{lem1}
 The mapping \eqref{muk4} extends by continuity to a $C^*$-algebra homomorphism   denoted by
 $$
  Q_1:C(S^*M)\rtimes G \to \mathcal{B}L^2(M)/\mathcal{K},
 $$
 such that the left triangle in \eqref{diag7} is commutative.
\end{lemma}
We complete the construction of Diagram \eqref{diag7} with the following lemma, whose proof is also given in the next section.
\begin{lemma}\label{lem2}
 There exists a mapping
 $$
  Q:C(S^*_GM)\rtimes G \to \mathcal{B}L^2(M)/\mathcal{K},
 $$
such that the right triangle in \eqref{diag7} is commutative.
\end{lemma} 
This lemma implies that  a  $G$-operator with symbol   vanishing on the subset 
$S^*_GM\subset S^*M$  is compact.

2. Using diagram \eqref{diag7}, we can easily prove the Fredholm theorem. Indeed, consider an elliptic   $G$-operator $1+D$. 
Then its symbol 
$1+\sigma(D)$ is invertible with the inverse symbol denoted by $(1+\sigma(D))^{-1}$. 
It follows $Q((1+\sigma(D))^{-1})$ furnishes an inverse to $1+D$ in the Calkin algebra; in particular, $1+D$ is a Fredholm operator. 
\end{proof}

\paragraph{The trajectory symbol.}

In applications, it is useful to have a trajectory symbol, whose invertibility is easier to check than the invertibility in the maximal crossed
product. 

We define the regular representation  at a point $(x,\xi)\in S^*_GM$  (see \cite{Ped1}, Section 7.7.1) of the crossed product $C(S^*_GM)\rtimes  G$  on
$L^2(G,dg)$  in terms of the covariant representation
$$
a\in C(S^*_GM) \longmapsto  a(h(x,\xi)):L^2(G,dg)\longrightarrow L^2(G,dg),
$$
$$
 g\in G \longmapsto L_g u(h)=u(g^{-1}h):L^2(G,dg)\longrightarrow L^2(G,dg).
$$
This regular representation at a point   $(x,\xi)$ takes the symbol to the element denoted by
$$
\sigma(D)(x,\xi):L^2(G,dg)\longrightarrow L^2(G,dg)
$$
and called the {\em trajectory symbol} of operator $D$. Explicitly, the trajectory symbol of   \eqref{gop1q} 
acts on a function $u(h)\in L^2(G,dg)$ as
$$
 \Bigl[\sigma(D)(x,\xi) u\Bigr](h)=\int_G \sigma(D_g)(h(x,\xi)) u(g^{-1}h)  dg.
$$
\begin{proposition}
 Suppose that the action of  $G$ on $S^*_GM$  is amenable. Then the invertibility of the symbol
 $$
  1+\sigma(D)\in (C(S^*_GM)\rtimes G)^+
 $$
 is equivalent to the invertibility of all the trajectory symbols
 $$
  1+\sigma(D)(x,\xi):L^2(G,dg)\longrightarrow L^2(G,dg), \quad (x,\xi)\in S^*_GM.
 $$
\end{proposition}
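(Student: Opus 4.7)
The plan is to deduce the statement from the basic structure theory of crossed product $C^*$-algebras under the amenability hypothesis. The argument proceeds in three steps.

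\textbf{Step 1 (Amenability collapses maximal and reduced crossed products).} Under the assumption that the action of $G$ on $S^*_GM$ is amenable, the Anantharaman-Delaroche theorem, cited here as \cite{AnLe1}, Theorem~21.2 in the amenable group case and \cite{Exe1} in general, asserts that the canonical surjection
$$
C(S^*_GM)\rtimes G \longrightarrow C(S^*_GM)\rtimes_r G
$$
from the maximal to the reduced $C^*$-crossed product is an isomorphism. This passes to the unitizations, so invertibility of $1+\sigma(D)$ in $(C(S^*_GM)\rtimes G)^+$ is equivalent to its invertibility in $(C(S^*_GM)\rtimes_r G)^+$.

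\textbf{Step 2 (The reduced crossed product is faithfully represented by trajectory symbols).} The trajectory symbol $\sigma(D)(x,\xi)$ is by construction (see Pedersen, Section~7.7.1) precisely the regular representation $\pi_{(x,\xi)}$ of the crossed product at the point $(x,\xi)$. By the very definition of the reduced crossed product, the direct sum
$$
\bigoplus_{(x,\xi)\in S^*_GM}\pi_{(x,\xi)}: C(S^*_GM)\rtimes_r G \longrightarrow \mathcal{B}\Bigl(\bigoplus_{(x,\xi)}L^2(G,dg)\Bigr)
$$
is a faithful $*$-monomorphism, hence isometric. A faithful unital $*$-homomorphism of $C^*$-algebras reflects invertibility, so $1+\sigma(D)$ is invertible if and only if the direct-sum operator $\bigoplus_{(x,\xi)}(1+\sigma(D)(x,\xi))$ is invertible in $\mathcal{B}(\bigoplus L^2(G,dg))$.

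\textbf{Step 3 (Pointwise invertibility yields uniform invertibility).} A direct-sum operator is invertible if and only if each component is invertible \emph{and} the norms of the inverses are uniformly bounded. The non-trivial direction is therefore to promote pointwise invertibility of $1+\sigma(D)(x,\xi)$ to a uniform bound on $\|(1+\sigma(D)(x,\xi))^{-1}\|$. For this I would proceed as follows: (i) since $T^*_GM$ is a closed cone in $T^*_0M$, the quotient $S^*_GM$ is a closed, hence compact, subset of $S^*M$; (ii) the map $(x,\xi)\mapsto 1+\sigma(D)(x,\xi)\in\mathcal{B}L^2(G,dg)$ is norm continuous, using that $g\mapsto D_g$ has compact support in $G$, that each $\sigma(D_g)$ is continuous on $S^*_GM$, and that the action of $G$ on $S^*_GM$ is smooth; (iii) the set of invertible operators on $L^2(G,dg)$ is open and inversion is norm continuous on it, so compactness of $S^*_GM$ produces a uniform upper bound on the norms of the inverses. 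The converse direction is immediate: any $*$-homomorphism sends invertibles to invertibles.

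\textbf{Main obstacle.} The central non-trivial input is the Anantharaman-Delaroche equivalence of maximal and reduced crossed products, which is invoked as a black box. The only remaining technical point is the norm continuity of $(x,\xi)\mapsto\sigma(D)(x,\xi)$, which is where the compact support of $\{D_g\}$ in $G$ and the continuity of the convolution kernel
$(g,h,x,\xi)\mapsto\sigma(D_g)(h(x,\xi))$
must be combined to control operator norms rather than just pointwise values on $L^2(G,dg)$. Once those two ingredients are established, the rest is the standard $C^*$-algebraic reasoning outlined above.
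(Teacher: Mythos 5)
There is a genuine gap in Step 3(ii). The paper's proof is a direct appeal to the Effros--Hahn-type theorem of Ionescu and Williams for amenable groupoids, in the form given by Nistor and Prudhon (\cite{NiPr1}, Theorem 3.18): under amenability the regular representations at the points of $S^*_GM$ form an \emph{exhausting} family, i.e.\ every irreducible representation of $C(S^*_GM)\rtimes G$ is weakly contained in one of them, and for an exhausting family invertibility of an element is equivalent to invertibility of all its images --- with no uniform bound on the inverses required. Your route instead tries to manufacture the uniform bound by hand from compactness of $S^*_GM$ together with norm continuity of $(x,\xi)\mapsto\sigma(D)(x,\xi)$. That norm continuity is exactly where the argument breaks: estimating the operator norm of $\sigma(D)(x,\xi)-\sigma(D)(x',\xi')$ (say by Schur's test on the convolution kernel) leads to the quantity $\sup_{h\in G}\bigl|\sigma(D_g)(h(x,\xi))-\sigma(D_g)(h(x',\xi'))\bigr|$, and making this small as $(x',\xi')\to(x,\xi)$ requires the action of $G$ on $S^*_GM$ to be \emph{equicontinuous}. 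For a noncompact Lie group this fails in general (any action with hyperbolic behaviour expands distances along orbits without bound), so the family of trajectory symbols is in general only strongly, not norm, continuous, and pointwise invertibility over the compact base does not yield a uniform bound on $\|(1+\sigma(D)(x,\xi))^{-1}\|$. Without that bound the direct-sum operator in Step 2 need not be invertible even though each summand is.

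Steps 1 and 2 are essentially sound: amenability of the action does collapse the maximal and reduced crossed products, the direct sum of the regular representations at all points of $S^*_GM$ is faithful on the reduced crossed product, and spectral permanence then reflects invertibility --- though for this Lie-group setting the relevant references are \cite{IoWi1} and \cite{NiPr1} rather than \cite{AnLe1} and \cite{Exe1}, which the paper invokes only in the discrete case. To repair the argument you would either have to add an equicontinuity hypothesis, which the proposition does not assume, or replace Step 3 by the exhausting-families mechanism, which is precisely the content of the paper's one-line proof.
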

\begin{proof}
 This statement follows from the results of Ionescu and Williams \cite{IoWi1} on the structure of primitive ideals of 
 $C^*$-algebras of amenable groupoids and is obtained in~\cite{NiPr1}, Theorem 3.18.
\end{proof}

\section{Proofs of the Auxiliary Results}

\subsection{Proof of Lemma \ref{lem1}}

For the proof, we need a $C^*$-crossed product of the algebra of pseudodifferential operators   $\Psi(M)$ and $G$
acting by automorphisms on this algebra by conjugation
\begin{equation}\label{act1}
 A\in\Psi(M) \longmapsto  \Phi_g A \Phi_g^*\in \Psi(M).
\end{equation}
To have a well-defined crossed product   (see \cite{Ped1}), we need the following proposition.
\begin{proposition}\label{pr2}
 The action \eqref{act1} is strongly continuous, i.e., given a pseudodifferential operator   $A$, the operator-function $\Phi_g A \Phi_g^*$ 
 is norm continuous in $g\in G$. 
\end{proposition}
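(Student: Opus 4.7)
The plan is to reduce the claim to norm-convergence $\Phi_h A \Phi_h^* \to A$ as $h \to e$ on a dense subclass of $\Psi(M)$, and then to handle the compact and smooth-symbol pieces separately.

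First, since each $\Phi_g$ is unitary, conjugation by $\Phi_{g_0}$ is norm-preserving, so
$$
 \|\Phi_g A \Phi_g^* - \Phi_{g_0} A \Phi_{g_0}^*\| = \|\Phi_{g_0^{-1}g} A \Phi_{g_0^{-1}g}^* - A\|.
$$
This reduces norm-continuity at an arbitrary $g_0$ to continuity at the identity. Together with the uniform bound $\|\Phi_h A \Phi_h^*\| \le \|A\|$, a standard $\varepsilon/3$-argument further reduces the statement to a dense subset of $\Psi(M)$. Using the exact sequence $0 \to \mathcal{K} \to \Psi(M) \to C(S^*M) \to 0$ and the density of smooth functions in $C(S^*M)$, I would write a general $A \in \Psi(M)$, modulo arbitrarily small norm error, as $A = A' + K$ with $A'$ a zero-order pseudodifferential operator of smooth symbol (supported in a single coordinate chart after a partition of unity) and $K$ compact.

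The compact term $K$ is then handled by strong continuity. The proof of Proposition \ref{pr1} establishes $\Phi_h \to I$ strongly as $h \to e$, and hence $\Phi_h^* \to I$ strongly by the standard polarization argument for unitary operators. Since $K$ is compact, its image of the unit ball of $L^2(M)$ is relatively compact, so pointwise convergence upgrades to uniform convergence on this image; this gives $\|(\Phi_h - I)K\| \to 0$. Similarly $\|K(\Phi_h^* - I)\| \to 0$, and therefore $\|\Phi_h K \Phi_h^* - K\| \to 0$. For the smooth-symbol piece $A'$, I would invoke Egorov's theorem together with the composition formulae for quantized canonical transformations recalled in the appendix: $\Phi_h A' \Phi_h^*$ is a zero-order pseudodifferential operator whose complete symbol is an explicit function of $h$ taking values in the symbol class $S^0$, reducing to $\sigma(A')$ at $h = e$. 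The symbol of $\Phi_h A' \Phi_h^* - A'$ then tends to $0$ together with all its derivatives as $h \to e$, and the Calder\'on--Vaillancourt theorem produces the required operator-norm bound in terms of finitely many such seminorms.

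The main obstacle lies in this last step. While the principal symbol $\sigma(A') \circ h^{-1}$ visibly depends smoothly on $h$, the subleading contributions to the complete symbol of $\Phi_h A' \Phi_h^*$ involve derivatives of the amplitudes $a_h$ transported along the canonical transformation $h$, and one must verify that all these contributions depend continuously on $h$ in each seminorm of $S^0$ --- and not merely modulo compact operators, since the compact remainders in Egorov's formula do not automatically have small norm. Here the smoothness hypotheses built into the definition of the representation, namely the smoothness of the family $\{a_g\}_{g \in G} \in C^\infty_c(G \times S^*M)$ and of the canonical action $G \times T^*_0M \to T^*_0M$, will be essential, and one should exploit the explicit decomposition $\Phi_h = \Phi(h,a_h) + K_h$ so that the stationary-phase expansion of the composition acquires smooth parametric dependence on $h$ in the relevant Fr\'echet topology.
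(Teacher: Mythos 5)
The paper states Proposition~\ref{pr2} without any proof --- it is simply invoked to make $(\Psi(M),G)$ a $C^*$-dynamical system --- so there is no argument of the authors to compare yours with, and your proposal must stand on its own. Its architecture is sound and the soft reductions are all carried out correctly: since in Section~3 the $\Phi_g$ form a genuine unitary representation, conjugation is isometric, so continuity at an arbitrary $g_0$ reduces to continuity at $e$ and the statement passes to a dense subset of $\Psi(M)$; the splitting $A=A'+K$ with $A'$ a classical zero-order operator and $K$ compact is available because $\Psi(M)$ is by definition the norm closure of the classical operators; and the compact part is handled completely ($\Phi_h\to I$ strongly by the proof of Proposition~\ref{pr1}, strong convergence of a uniformly bounded family is uniform on the precompact set $K(\text{unit ball})$, hence $\|(\Phi_h-I)K\|\to 0$, and $\|K(\Phi_h^*-I)\|=\|(\Phi_h-I)K^*\|\to 0$ by taking adjoints).

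The gap is exactly where you locate it, and unfortunately it is the entire analytic content of the proposition. Egorov's theorem in the form \eqref{ego1} controls only the principal symbol, and an asymptotic expansion of the complete symbol of $\Phi_h A'\Phi_h^*$ gives no operator-norm control of the smoothing remainders, as you yourself observe; so the assertion that the complete symbol converges in every $S^0$-seminorm is precisely what has to be proved, not a consequence of quoting Egorov. To close it you must bypass the asymptotic calculus and work with the exact composition: insert the decomposition $\Phi_h=\Phi(h,a_h)+K_h$ of \eqref{deco6}, dispose of all terms containing $K_h$ by norm continuity of $K_h$ together with compactness and strong continuity of $\Phi_h$, and then show that the Schwartz kernel of $\Phi(h,a_h)A'\Phi(h,a_h)^*$ is an oscillatory integral whose phase and amplitude depend smoothly on $h$ by hypotheses 1) and 2); the stationary-phase reduction then yields an honest amplitude in $S^0$ depending continuously on $h$ in the Fr\'echet topology, plus a remainder whose kernel varies continuously in $C^\infty(M\times M)$, after which the Calder\'on--Vaillancourt bound and a kernel estimate give norm continuity. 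As written, your text names this step (``one must verify\dots'', ``one should exploit\dots'') but does not perform it, so it is a correct program rather than a proof of the decisive part.
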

 
By Proposition \ref{pr2}, the pair $(\Psi(M),G)$ is a $C^*$-dynamical system and, hence, the maximal crossed product denoted
 by  $\Psi(M)\rtimes G$ is well defined. Further, this $C^*$-dynamical  system has a covariant represention in  $L^2(M)$.
 This covariant representation gives a representation
 of the crossed product on  $L^2(M)$, which takes a collection of operators $\{D_g\}_{g\in G}$ to the
  $G$-operator \eqref{gop1q}.

Consider the commutative diagram
\begin{equation}\label{diag1a}
 \xymatrix{
      0 \ar[r]  & \mathcal{K}\rtimes  G \ar[r]\ar[d] &  \Psi(M)\rtimes  G  \ar[d]\ar[r]^{\sigma} & C(S^*M)\rtimes  G \ar[r] & 0\\
      0 \ar[r]  & \mathcal{K} \ar[r]                     &  \mathcal{B}L^2(M)\ar[r]                   & \mathcal{B}L^2(M)/\mathcal{K}\ar[r] & 0
 } 
\end{equation}
Here the vertical mappings take elements of crossed products (i.e., operator functions on the group) to the corresponding $G$-operators on $M$. 
The upper row of the diagram is a component-wise crossed product of the standard row  
$$
 0\longrightarrow \mathcal{K} \longrightarrow \Psi(M) \stackrel\sigma\longrightarrow C(S^*M) \longrightarrow 0
$$
by $G$.  The diagram \eqref{diag1a} is commutative by construction, while its rows are exact  
(for the upper row this follows from the exactness of the maximal crossed product functor   \cite{Wil1}, Proposition 3.19, 
while for the lower row this is obvious).
 
We now define the desired mapping    $Q_1$ by requiring that the diagram, below, 
\begin{equation}\label{diag1b}
 \xymatrix{
      0 \ar[r]  & \mathcal{K}\rtimes  G \ar[r]\ar[d] &  \Psi(M)\rtimes  G  \ar[d]\ar[r]^{\sigma} & C(S^*M)\rtimes  G \ar[r]\ar[d]_{Q_1} & 0\\
      0 \ar[r]  & \mathcal{K} \ar[r]                     &  \mathcal{B}L^2(M)\ar[r]                   & \mathcal{B}L^2(M)/\mathcal{K}\ar[r] & 0
 } 
\end{equation}
is commutative.  
Indeed, given $a\in C(S^*M)\rtimes  G$, the exactness of the upper row shows that $a\in\im \sigma$ and there exists
$\sigma^{-1}(a)\in \Psi(M)\rtimes  G$. We map this element to the corresponding   $G$-operator and then to its image in the Calkin algebra
 $\mathcal{B}L^2(M)/\mathcal{K}$, and denote it by  $Q_1(a)$. 
The mapping $Q_1$ is well-defined since a different choice of  $\sigma^{-1}(a)$
yields an operator, which differs by a compact operator, and hence defines the same element in the Calkin algebra. Moreover, $Q_1$ defines a $C^*$-algebra homomorphism. This completes the proof of Lemma~\ref{lem1}.

\subsection{Proof of Lemma \ref{lem2}}

To prove Lemma~\ref{lem2},  let us compute the wave front set for $G$-operators.

\paragraph{Wave front sets of    $G$-operators.}

Recall (e.g., see \cite{Hor1}) that the wave front set  of an operator
$$
D:C^\infty(M)\longrightarrow \mathcal{D}'(M)
$$ 
is defined in terms of the wave front set of its Schwartz kernel $K_D\in \mathcal{D}'(M\times M)$ as
$$
\wf'(D)=\{(x,p,x',p')\in T^*_0(M\times M) \;|\; (x,p,x',-p')\in \wf(K_D)\}.
$$

\begin{proposition}\label{pwf1}
Given a $G$-operator
\begin{equation}\label{eq-gop1}
  u\longmapsto Du=\int_G D_g\Phi_g u dg,
\end{equation}
where  $\{D_g\}$ is a smooth family of classical pseudodifferential operators on $M$, we have
 \begin{equation}\label{f32}
   \wf'(D)\subset  \left\{(x,p,x',p')\in T^*_0(M\times M)\;\Bigl|\; 
    \begin{array}{l}\exists g\in G: (x,p,x',p')\in \graph g\\
        (x,p), (x',p')\in T^*_GM, (x,p)\in \ess  \sigma_\psi(D_g)
       \end{array}
\right\},
\end{equation}
where $\sigma_\psi(D_g)$ is the complete symbol of pseudodifferential operator $D_g$, while $\ess$ is the smallest closed conical set outside
of which the symbol has order   $-\infty$.
\end{proposition}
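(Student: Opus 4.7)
The plan is to realize $K_D$ as the pushforward under the projection $G\times M\times M\to M\times M$ of the family of Schwartz kernels of $D_g\Phi_g$ viewed as a single distribution $K(g,x,y)$ on $G\times M\times M$, to compute $\wf(K)$ from a local oscillatory integral, and then to apply H\"ormander's theorem on wave fronts of pushforwards. Using the decomposition $\Phi_g=\Phi(g,a_g)+K_g$ from the appendix together with the pseudodifferential nature of $D_g$, one obtains, modulo smoothing kernels, a local representation
$$K(g,x,y) \equiv \int e^{i\Psi(g,x,y,\theta)}\,b(g,x,y,\theta)\,d\theta,$$
where $\Psi(g,\cdot)$ is, for each fixed $g$, a non-degenerate phase function parameterizing the canonical relation $\graph g$ of $\Phi_g$, and the amplitude $b$ inherits the essential support $\ess\sigma_\psi(D_g)$ in the $(x,p)$-variables from $D_g$ and $a_g$.

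By the standard wave-front bound for oscillatory integrals recalled in the appendix,
$$\wf(K) \subset \bigl\{(g,\Psi'_g;\,x,\Psi'_x;\,y,\Psi'_y):\Psi'_\theta=0,\ (x,\Psi'_x)\in \ess\sigma_\psi(D_g)\bigr\}.$$
Since the critical set $\{\Psi'_\theta=0\}$ parameterizes $\graph g$ (in $\wf'$-coordinates), this already yields the graph and essential-support conditions for every element of $\wf(K)$. Recovering $K_D$ by integration over $G$ and applying H\"ormander's pushforward theorem then imposes the additional constraint that the $G$-component vanish, i.e.\ $\Psi'_g=0$ at the critical point.

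The remaining, and main, step is to identify $\Psi'_g=0$ with the transverse cotangent condition $(x,p),(y,q)\in T^*_GM$. For any $H\in\mathcal{G}$, differentiation of $\Psi$ along the curve $t\mapsto g\exp(tH)$ at a critical point lying over $(x,p,y,q)$ yields $\partial_t\Psi=H(x,p)$: this is the Hamilton--Jacobi relation for a smooth family of canonical transformations with generating phase $\Psi$, identifying the time-derivative of the generating function with the Hamiltonian evaluated at the base point, as recorded in the appendix on quantized canonical transformations. Hence $\Psi'_g=0$ is equivalent to $H(x,p)=0$ for all $H\in\mathcal{G}$, i.e.\ $(x,p)\in T^*_GM$; by the $G$-invariance of $T^*_GM$ one then also has $(y,q)=g^{-1}(x,p)\in T^*_GM$. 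The main technical obstacle is precisely this Hamilton--Jacobi identification, which requires extracting the Hamiltonian of an arbitrary Lie-algebra generator from the $g$-derivative of the phase of a quantized canonical transformation by a careful local-coordinate computation.
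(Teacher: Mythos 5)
Your proposal is correct and follows essentially the same route as the paper: realize $K_D$ as a pushforward from $G\times M\times M$, bound the wave front of the resulting oscillatory integral, and convert the extra stationarity condition $\Psi'_g=0$ into the vanishing of all Hamiltonians via the Hamilton--Jacobi relation for the generating function (the paper carries out this last step by the explicit computation \eqref{der3}, obtaining $H(x',p')$ at the source point rather than $H(x,p)$, which is equivalent for the conclusion by the $G$-invariance of $T^*_GM$). The only ingredient you leave implicit is the preliminary reduction to $g$ near the identity (by covering $G$ and factoring out $\Phi_{g_0}$), which is what legitimizes using the single global phase $S(x,g,p')-p'x'$.
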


\begin{proof}
1. Consider the case, where $u$ depends additionally on $g\in G$. Define the operator 
$$
 u(\cdot,g)\longmapsto D'u=\int_G D_g \Phi_g u(\cdot ,g) dg 
$$
from functions on $M\times G$ to functions on $M$. Since $D=D'\pi^*$, where 
$$
 \pi^*:C^\infty(M)\to C^\infty(M\times G)
$$ 
is the operator of constant extension with respect to   $g$,
we get for the Schwartz kernels
$$
 K_D\in \mathcal{D}'(M\times M),\qquad K_{D'}\in \mathcal{D}'(M\times (M\times G))
$$
the formula
$$
K_D=\pi'_*K_{D'},\qquad \text{where }\pi':M\times (M\times G) \longrightarrow M\times M \text{ is the natural projection}.
$$

2. It suffices to show   \eqref{f32} in the situation, when $D_g$ is identically zero for all $g$ outside a small neighborhood of the identity.  
This follows from the fact that the integral can be written as a sum of integrals over neighborhoods that cover the group. For each integral over a neighborhood of, say,  a point $g_0\in G$, we may take the factor $\Phi_{g_0}$ outside the integral  and use the fact that $\Phi_{g_0}$ 
acts on wave front sets by shifting them along the canonical transformation  $g_0$, together with the $G$-invariance of the set  $T^*_GM$.

3. Applying formula \eqref{f1} in the appendix to the distribution $K_D$, we obtain the following expression for the wave front set of this
distribution  
\begin{equation}\label{f2}
  \wf(K_D)\subset\left\{(x,\Psi'_{x},x',\Psi'_{x'})\in T^*_0(M\times M)\;\Bigr|\; \begin{array}{c}
   \exists g\in G, p'\ne 0: \Psi'_{g,p'}=0, \\ (x,p )\in \ess \sigma_\psi (D_g) 
\end{array}   \right\}.
\end{equation}
Here   the phase function is equal to  $\Psi=S(x,g,p')-p'x'$ (we use the fact that $g$ is in a small neighborhood of the identity,
and  $\Phi_g$ is microlocally defined as in  \eqref{qq1}). Note that the condition $\Psi'_{g}=0$  in \eqref{f2} means that
\begin{equation}\label{cond4}
   \frac{d}{dt}\Bigl|_{t=0 }S(x,g_t,p')=0,   
\end{equation}
where
$$
g_t=g h_t,\qquad h_t(x,p)\equiv (x,p)+t(H'_p,-H'_x)(x,p).
$$
Here and below  $\equiv$ means equality modulo $O(t^2)$, and,  Equation 
  \eqref{cond4} should be satisfied for all Hamiltonians $H\in \mathcal{G}$.

Since $\Psi'_g=S'_g$ and $\Psi'_{p'}=S'_{p'}-x'$,  it follows from \eqref{f2} that
$$
\wf'(K_D)\subset \left\{(x,p,x',p') \Bigl| \exists g: (x,p,x',p')\in\graph g,  \frac{d}{dt}\Bigl|_{t=0 }S(x,g_t,p')=0\right\};
$$
here we used the fact that $S$ is a generating function for the Lagrangian manifold $\graph g$, i.e., it satisfies Equations
 \eqref{phi77} from the appendix. For the computation of the generating function see \eqref{eq-ars1} in the appendix. We have
\begin{multline}\label{der3}
S(x,g_t,p')=p'(g^{-1}_t(x,p_t))_x\equiv p'(h_{-t}g^{-1} (x,p_t))_x\equiv \\
\equiv p'\Bigl[ g^{-1}(x,p_t)-t(H'_p,-H'_x)(g^{-1} (x,p))\Bigr]_x\equiv\\
\equiv
p'x'+t\Bigl[ p'\frac{\partial (g^{-1})_x}{\partial p}\frac{dp_t}{dt}\Bigl|_{t=0 }-p'H'_p(x',p')\Bigr] 
=p'x'-tH(x',p'),
\end{multline}
where we used the homogeneity of   $H(x,p)$ in $p$ and Euler's formula
$
 p'H_p(x',p')=H(x',p'),
$
and also the equality
$$
p'\frac{\partial (g^{-1})_x}{\partial p}=0,
$$
which follows from the homogeneity of the canonical transformation   $g$, see \eqref{hom4} in the appendix.

It follows from  \eqref{der3} that  \eqref{cond4} is equivalent to
$$
H(x',p')=0.
$$
Since $T^*_GM$ is $G$-invariant, this implies also that $H(x,p)=0$, and the proof of the proposition is complete.
\end{proof}

\paragraph{Proof of Lemma~\ref{lem2}.}

1. Let us show that the mapping  $Q_1$ in~\eqref{diag7} is identically zero on the subalgebra
$$
 C_0(S^*M\setminus S^*_GM)\rtimes G \subset  C(S^*M)\rtimes G. 
$$
It suffices to prove this statement on the dense set of smooth families  
$$
 \{\sigma(D_g)\}_{g\in G}\in C^\infty_c(G\times (S^*M\setminus S^*_GM)).
$$
We treat such a smooth family as a family of symbols of pseudodifferential operators  $\{D_g\}_{g\in G}$, 
whose complete symbols vanish identically in a neighborhood of    $S^*_GM$.
Then by Proposition~\ref{pwf1} the wave front set of the $G$-operator \eqref{eq-gop1} is empty, i.e., its Schwartz kernel
is smooth and the operator is smoothing, hence, compact. This gives us zero element in the Calkin algebra. This ends the proof of the statement. 
 
2. Consider the short exact sequence of $C^*$-algebras
$$
 0\longrightarrow  C_0(S^*M\setminus S^*_GM)\rtimes G \longrightarrow  C(S^*M)\rtimes G\longrightarrow 
 C(S^*_G M)\rtimes G
 \longrightarrow 0. 
$$
By Item 1 above, the mapping $Q_1$ is defined on the middle term in this sequence and vanishes
on the ideal  
$C_0(S^*M\setminus S^*_GM)\rtimes G$. Hence, it extends to the quotient, which is isomorphic to  $C(S^*_GM)\rtimes G$.

The proof of Lemma~\ref{lem2} is now complete.
 
 \section{Examples and Remarks}

1. Let $G$ be a discrete and amenable group, acting on $M$.  
We choose 
as unitary operators $\widetilde\Phi_g$ the weighted shift operators
$$
 \widetilde\Phi_g =\left(\frac {g^{-1*}\vol}{\vol}\right)^{1/2} T_g: L^2(M)\longrightarrow L^2(M),\quad
 T_gu(x)=u(g^{-1}x).
$$
Then we obtain as a corollary the Fredholm theorems in   \cite{AnLe1,AnLe2}.

2. If $G$ is a compact Lie group acting on   $M$, then we obtain the Fredholm results  in \cite{SaSt22,Ster20}.

3. If $G$ is a  Lie group acting on $M$ locally-freely, then the
$G$-operators coincide with the transverse pseudodifferential operators   
with respect to the foliation on $M$ defined by the orbits of the group action which were studied by Kordyukov in \cite{Kord3}.

4. In~\cite{GuSt3}, Guillemin and Sternberg introduced  algebras of Fourier integral operators associated with  smooth homogeneous submanifolds    $\Sigma\subset T^*_0M$ such that  
\begin{itemize}
\item $\Sigma$ is coisotropic (this means that for all $\xi\in \Sigma$ we have $(T_\xi\Sigma)^\perp\subset T_\xi\Sigma$,
where  $(T_\xi\Sigma)^\perp$ stands for the orthogonal space with respect to the symplectic form on   $T^*_0M$). 
This condition implies that the family of linear subspaces   $(T_\xi\Sigma)^\perp$, $\xi\in\Sigma$ 
defines a foliation on $\Sigma$, which we denote by  $\mathcal{F}$.
\item The foliation $\mathcal{F}$ has a smooth Hausdorff space of leaves, i.e., there exists a smooth manifold   $S$ and a projection 
$$
 \rho: \Sigma \longrightarrow S,
$$
whose fibers are the leaves of the foliation $\mathcal{F}$.
\item The $\mathbb{R}_+$-action on  $\mathcal{F}$ by dilations extends to a free action on   $S$.
\end{itemize}

Under these assumptions, Guillemin and Sternberg considered the 
algebra of Fourier integral operators associated with the canonical relation   
$$
 \mathcal{C}=\{(\xi,\eta)\in \Sigma\times \Sigma \;|\; \rho(\xi)=\rho(\eta)\}\subset T^*_0M\times T^*_0M.
$$ 

It turns out  that in order to construct this operator algebra, it is not necessary to  require  that the space of leaves of $\mathcal{F}$ is smooth.
In fact, one can drop this assumption by considering the corresponding 
Fourier integral operators as operators associated with the foliation, see \cite{Kord3} in the situation, where the foliation  $\mathcal{F}$ 
is induced by a foliation on the main manifold  $M$, see also a class of pseudodifferential operators in \cite{SaSt11}.

In the present paper, an important role is played by the closed homogeneous subspace  $T^*_GM\subset T^*_0M$,
i.e., the set of common zeroes of the Hamiltonians, which generate the action of the Lie group   $G$ on $T^*_0M$. 
A straightforward computation shows that, whenever $T_GM$ is a smooth submanifold   (e.g., when
the differentials of the Hamiltonians are linearly independent) it is coisotropic  and the theory of Guillemin and Sternberg   applies. 
One can show that the algebra of such Fourier integral operators coincides with the above described algebra of $G$-operators.

\begin{remark}
The space $T^*_GM$ is in general singular and hence, the classical theory of Fourier integral operators 
can not be applied.  Let us give a simple example. On  $\mathbb{R}^2$ with coordinates  $x_1,x_2$ and dual coordinates  $p_1,p_2$,
consider the Hamiltonian action of the group   $G=\mathbb{R}$ generated by the Hamiltonian
$$
 H(x,p)=x_1^2p_1+x_2^2p_2.
$$
The zero   set of this Hamiltonian is obviously not a smooth manifold. Hence, Guillemin and Sternberg's theory does not apply to this Hamiltonian, while
the methods of our paper can be applied.  From this point of view this article can be seen as an extension of their theory 
to singular submanifolds  $\Sigma\subset T^*_0M$. 
\end{remark}

\section{Appendix}

For the sake of completeness we recall some basic facts from the theory of oscillatory integrals and quantized canonical transformations.  

\subsection{Oscillatory Integrals}

\paragraph{Wave front sets of oscillatory integrals.}
We need the following result concerning wave front sets of distributions defined by oscillatory integrals, e.g., see \cite{Dui1}, Theorem 2.2.2. 

For $X\subset \mathbb R^m$ consider the distribution $A\in \mathcal D'(X)$
\begin{equation}\label{eq-63}
u\longmapsto A(u)= \iint e^{i\Psi(x,\theta)}a(x,\theta) u(x)d\theta dx,\qquad u\in  C^\infty_c(X),
\end{equation}
defined in terms of an amplitude
$a(x,\theta)\in S^m(X\times  \mathbb{R}^n)$ and a real phase function  $\Psi(x,\theta)\in C^\infty(X\times (\mathbb{R}^n\setminus 0))$, i.e., $\Psi$ is homogeneous of degree $1$ in $\theta$ and 
satisfies the condition $\Psi'_{x,\theta}\ne 0$ whenever $\Psi=0$.

According to the stationary phase lemma  the wave front set of $A$ satisfies
\begin{equation}\label{dui1}
  \wf(A)\subset \{(x,\Psi'_x)\in T^*_0X \;|\; \exists  \theta\ne 0:  \Psi'_\theta=0\text{ and }(x,\theta)\in\ess\;a\}.
\end{equation}
Here $\ess a\subset  X\times  (\mathbb{R}^n \setminus 0)$ is the minimal closed conical set outside of which   $a\in S^{-\infty}$.

\paragraph{A generalization.}

Let now the phase function and the amplitude  depend smoothly on a parameter   $t\in T$. 
We are interested in distributions $A\in \mathcal{D}'(X)$ of the form
$$
u\longmapsto A(u)= \iiint e^{i\Psi(x,t,\theta)}a(x,t,\theta) u(x)d\theta dx dt,\qquad 
u\in C^\infty_c(X).
$$
\begin{proposition}\label{pr4}
 The wave front set of the distribution $A\in \mathcal{D}'(X)$  satisfies
 \begin{equation}\label{f1a}
  \wf(A)\subset \{(x,\Psi'_{x})\in T^*_0X \;|\;  \exists (\theta,t), \theta\ne 0:\;  \Psi'_{\theta,t}=0, \text{ and }(x,t,\theta)\in\ess\;a\}.
\end{equation}
\end{proposition}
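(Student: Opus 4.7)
The plan is to reduce this statement to the standard result \eqref{dui1} by regarding $t$ as extra \emph{base} variables (rather than as extra phase variables) and then pushing forward along the projection that integrates $t$ out. Thus no fresh stationary-phase calculation is needed beyond the one that already underlies \eqref{dui1}.

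Concretely, I would first introduce the auxiliary distribution $B\in\mathcal{D}'(X\times T)$ defined by
\[
B(v)=\iiint e^{i\Psi(x,t,\theta)}a(x,t,\theta)v(x,t)\,d\theta\,dx\,dt,\qquad v\in C^\infty_c(X\times T).
\]
This is an oscillatory integral on the enlarged base $Y=X\times T$ with phase variables $\theta\in\mathbb{R}^n\setminus 0$ alone, so the homogeneity in $\theta$ is untouched, and the non-degeneracy $\Psi'_{x,t,\theta}\neq 0$ on $\{\Psi=0\}$ is the natural hypothesis that promotes $\Psi$ to a phase function on $Y$. Applying \eqref{dui1} on $Y$ then gives
\[
\wf(B)\subset\bigl\{(x,t,\Psi'_x,\Psi'_t)\in T^*_0Y \;\big|\; \exists\,\theta\neq 0:\ \Psi'_\theta=0,\ (x,t,\theta)\in\ess a\bigr\}.
\]

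Next I would observe that $A=\pi_* B$, where $\pi:X\times T\to X$ is the projection: namely $A(u)=B(u\otimes 1)$. By localizing the amplitude in $t$ via a partition of unity -- permissible because the wave front set is computed locally -- I may reduce to the case in which $a$ has compact support in $t$, so that $\pi$ is proper on $\supp B$ and the pushforward is well defined. The standard rule for the wave front set of a proper pushforward yields
\[
\wf(A)\subset\bigl\{(x,\xi)\in T^*_0X \;\big|\; \exists\,t\in T:\ (x,t,\xi,0)\in\wf(B)\bigr\}.
\]
Intersecting with the estimate for $\wf(B)$ forces the joint existence of $(t,\theta)$ with $\theta\neq 0$, $\Psi'_\theta=0$, $\Psi'_t=0$, $\Psi'_x=\xi$ and $(x,t,\theta)\in\ess a$, which is exactly \eqref{f1a}.

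The main obstacle is not analytic but bookkeeping: one must justify the localization in $t$ and invoke the pushforward rule for wave front sets in the correct form (the vanishing of the $t$-component of the covector upstairs is what produces the new condition $\Psi'_t=0$ downstairs). Both ingredients are standard in the Hörmander framework, so once set up properly the proof is essentially a one-line reduction to \eqref{dui1}.
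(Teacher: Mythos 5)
Your proposal is correct and follows essentially the same route as the paper: introduce the auxiliary distribution on $X\times T$ with $t$ as a base variable, apply \eqref{dui1} there, and push forward along $\pi:X\times T\to X$ (the paper cites Proposition 1.3.4 of \cite{Dui1} for the pushforward rule, which encodes exactly your observation that the vanishing $t$-component of the covector upstairs yields $\Psi'_t=0$). The only cosmetic difference is your extra remark about localizing in $t$ to ensure properness, which the paper leaves implicit.
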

\begin{proof}
Because of the integral over $t$ we cannot apply \eqref{dui1} directly.
Instead, we introduce the auxiliary distribution  $A'\in \mathcal{D}'(X\times T)$ given by 
$$
u\longmapsto A'(u)= \iiint e^{i\Psi(x,t,\theta)}a(x,t,\theta) u(x,t)d\theta dx dt,\qquad u\in C^\infty_c(X\times T).
$$

The latter distribution is well defined, since   $\Psi'_{x,t,\theta}\ne 0$ when $\Psi=0$ (this follows from the nondegeneracy of the phase for each fixed   $t$). According to \eqref{dui1} we get
$$
\wf(A')\subset \{(x,t,\Psi'_{x,t})\in T^*_0(X\times T) \;|\;  \exists \theta\ne 0:\;  \Psi'_\theta=0\text{ and }(x,t,\theta)\in\ess\;a\}
$$

Further, for the projection  $\pi:X\times T\to X$ we have the pushforward mapping for distributions 
$$
\pi_*: \mathcal{D}'(X\times T) \longrightarrow \mathcal{D}'(X)
$$
$$
 \pi_*(A') u=A'(\pi^*u), \qquad u\in C^\infty(X).
$$
Clearly $A=\pi_* (A')$. One the other hand, Proposition 1.3.4 in \cite{Dui1}  shows how the wave front set is transformed
under this pushforward. Namely, 
$$
\wf(\pi_*(A'))\subset \{(x,p)\in T^*_0X \;|\;  \exists t\in T:\; (x,t,\pi^*(p))\in \wf(A')\}
$$
Hence, we obtain the following equality for the wave front set of   $A$:
\begin{equation}\label{f1}
  \wf(A)\subset \{(x,\Psi'_{x})\in T^*_0X \;|\;  \exists (\theta,t), \theta\ne 0:\;  \Psi'_{\theta,t}=0, \text{ and }(x,t,\theta)\in\ess\;a\}.
\end{equation}
The proof of Proposition~\ref{pr4} is complete.
\end{proof}

\subsection{Quantized Canonical Transformations}

\paragraph{Homogeneous canonical transformations.}

Let
$$
 g:T^*_0M\longrightarrow T^*_0M
$$ 
be a homogeneous canonical transformation. It is well known that this condition is equivalent to the requirement that 
  $g$ preserves the canonical $1$-form $pdx$ on $T^*_0M$. In local coordinates  $x,p$, if we write $g$ as
\begin{equation}
(x,p)=(x_1,x_2,...,p_1,p_2,...)\longmapsto (g_x(x,p),g_p(x,p))=(g_{x1},g_{x2},...,g_{p1},g_{p2},\ldots),
\end{equation}
the condition that $g$ preserves $pdx:$ $g^*(pdx)=pdx$ is written as
\begin{equation}\label{hom4}
 \sum_j g_{pj}\frac{\partial g_{xj}}{\partial p_k}=0,\qquad \sum_j g_{pj}\frac{\partial g_{xj}}{\partial x_k}=p_k,
 \quad \forall k.
\end{equation}

Consider the Lagrangian manifold (the graph of $g$)
$$
L=\graph g=\{(gm,m)\in T^*_0M\times T^*_0M | m\in T^*_0M\}.
$$
On the product $T^*_0M\times T^*_0M$, we denote local coordinates as $(x,p,x',p')$.

\begin{proposition}
 Suppose that  $g$ is close to the identity transformation and corresponds to a Hamiltonian flow.  Then on the Lagrangian manifold   $L=\graph g$
 we can choose the canonical coordinates $x,p'$ and in addition the generating function of this manifold is expressed in terms of $g $
 as\footnote{For instance, if $g=id$, then $S(x,p')=xp'.$ }
\begin{equation}\label{eq-ars1}
  S(x,g,p')=p'x':= p' \bigl( (g_x)^{-1}(x,p')\bigr).            
\end{equation}
In other words,  $L$ is defined by the equations
\begin{equation}\label{phi77}
L=\{(x,p,x',p')\in T^*_0M\times T^*_0M \;| \; p=S'_x(x,g,p'),\; x'=S'_{p'}(x,g,p') \}.
\end{equation}
\end{proposition}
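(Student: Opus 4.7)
The plan is to decompose the proof into two logically independent parts: (a) verifying that $(x,p')$ form legitimate local coordinates on $L$, and (b) computing the generating function directly from the fact that $g$ preserves the canonical $1$-form.

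For part (a), I would parametrize $L=\graph g$ by $m=(x',p')\in T^*_0M$ via the map $m\mapsto (g(m),m)$, so that $x=g_x(x',p')$ and $p=g_p(x',p')$. To use $(x,p')$ as coordinates, one must invert the map $(x',p')\mapsto(g_x(x',p'),p')$. Its Jacobian has block form
\[
\begin{pmatrix}\partial g_x/\partial x' & \partial g_x/\partial p'\\ 0 & I\end{pmatrix},
\]
so invertibility reduces to invertibility of $\partial g_x/\partial x'$. Since $g$ is close to the identity (in particular the Hamiltonian flow starts at the identity, for which this block equals $I$), the implicit function theorem gives the desired local inverse $x'=(g_x)^{-1}(x,p')$.

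For part (b), the key identity is that $g$ preserves the canonical $1$-form: $g^*(p\,dx)=p\,dx$. Pulling back via the parametrization $m\mapsto(g(m),m)$ of $L$, this translates into the assertion
\[
p\,dx - p'\,dx' = 0\qquad\text{on }L.
\]
Applying the Leibniz rule $d(p'x')=p'\,dx'+x'\,dp'$ rewrites this as
\[
d(p'x') = p\,dx + x'\,dp'\qquad\text{on }L.
\]
Now define $S(x,g,p'):=p'\cdot(g_x)^{-1}(x,p')=p'x'$, viewed via the coordinates established in part (a). Reading off the differential $dS=p\,dx+x'\,dp'$ immediately yields $p=S'_x(x,g,p')$ and $x'=S'_{p'}(x,g,p')$, which is exactly the description \eqref{phi77} of $L$.

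The only delicate point is part (a), i.e., verifying that the closeness hypothesis on $g$ is strong enough to guarantee that $\partial g_x/\partial x'$ is invertible uniformly in a neighborhood — which is immediate from continuity. Part (b) is then a short computation whose only subtlety is bookkeeping of signs in the relation between the pullback of $p\,dx$ and the canonical form on the second factor; the homogeneity relations \eqref{hom4} are not actually needed here, they serve elsewhere in the paper (e.g.\ in \eqref{der3}).
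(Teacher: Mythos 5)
Your proof is correct, but it takes a genuinely different route from the paper's. The paper derives \eqref{eq-ars1} from Hamilton--Jacobi theory: it writes $g$ as the time-one flow of a homogeneous Hamiltonian $H$, solves the Cauchy problem $S'_t+H(x,S'_x)=0$, $S|_{t=0}=xp'$, and uses the fact that the action integral $\int(p\,dx-H\,dt)$ vanishes along trajectories because $pH'_p=H$ by Euler's identity; the generating-function property \eqref{phi77} is then checked by a separate computation using \eqref{hom4}. You instead bypass the flow entirely and argue from the invariance of the canonical $1$-form: on $L$ one has $p\,dx=p'\,dx'$, so $d(p'x')=p\,dx+x'\,dp'$, and reading off the differential of $S=p'x'$ in the coordinates $(x,p')$ gives \eqref{phi77} at once. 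Your route is shorter and more general -- it needs only that $g$ is a homogeneous canonical transformation with $\partial g_x/\partial x'$ invertible, not that it arises from a flow (the flow hypothesis enters only through ``close to the identity'', which you correctly isolate in part (a) as the condition making $(x,p')$ legitimate coordinates; the paper leaves this point implicit). One small correction: your closing remark that the relations \eqref{hom4} ``are not actually needed here'' is misleading, since \eqref{hom4} is precisely the coordinate expression of the identity $g^*(p\,dx)=p\,dx$ on which your entire part (b) rests -- you are using it, just in invariant form, exactly as the paper uses it in its step 3.
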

\begin{remark}
In \eqref{eq-ars1} $x'$ is calculated as follows: we take the $x$-component $g_x$ of g and then take  the inverse function $(g_x)^{-1}$ with $p'$ fixed. Note that the same result is obtained if we replace this expression
by $(g^{-1}(x,p))_x$.   
\end{remark}
\begin{proof}
 The proof essentially follows from the theory of Hamilton-Jacobi equations, see e.g.~\cite{Arn1}.

1. Suppose that $g(x,p)$ is equal to the Hamiltonian flow for time   $t=1$ for a Hamiltonian  $H(x,p)$ (which is homogeneous of degree one in $p$). In other words, we have 
 $g(x,p)=(x(1),p(1))$, where the functions $(x(t),p(t))$ are defined as the solutions of the Cauchy problem
 $$
  \left\{
     \begin{array}{l}
       \dot x=H'_p,\\
       \dot p=-H'_x,\\
       x(0)=x,p(0)=p.
     \end{array}
  \right.
 $$
 
2.  Consider the Cauchy problem for the Hamilton--Jacobi equation
$$
  \left\{
     \begin{array}{l}
       S'_t+H(x,S'_x)=0,\\
       S|_{t=0}=xp' .
     \end{array}
  \right.
$$
By Hamilton--Jacobi theory (and since the Hamiltonian is independent of $t$) we see that the solution of this
Cauchy problem can be written as  
\begin{equation}\label{eq-s}
 S(x(z,t),t)=S(z,0)+\int(pdx-Hdt)=S(z,0)+0\equiv zp',
\end{equation}
where $x(z,t)$ is the $x$-component  of the solution of the Hamiltonian system
$$
  \left\{
     \begin{array}{l}
       \dot x=H'_p\\
       \dot p=-H'_x\\
       x(0)=z,p(0)=p'.
     \end{array}
  \right.
$$
Now we take $t=1$ in  \eqref{eq-s} and express  $z$ in terms of  $x=x(z,1)=g_x(z,p')$.  Then we see that we obtain precisely the
function \eqref{eq-ars1}.

3. It remains to show that the function   $S(x,g,p')$ is a generating function for the Lagrangian manifold $\graph g$.
This is proved by a direct computation, which is easy to do using  \eqref{hom4}.  

The proof of proposition is now complete. 
\end{proof}

\paragraph{Quantized canonical transformations.}

Let $g:T^*_0M\longrightarrow T^*_0M$ be a homogeneous canonical transformation. 
By a quantized canonical transformation associated with $g$ we mean an arbitrary Fourier integral operator associated with 
the Lagrangian manifold  $L={\rm graph }g\subset T^*_0M\times T^*_0M$.
Detailed expositions of Fourier integral operators can be found in   \cite{Hor4,NOSS1,MSS1,Dui1}.

Let us give an explicit expression for the quantized canonical transformation in the case, when   $g$ 
is equal to a Hamiltonian flow at small times, i.e., the transformation is close to the identity. 
In this case as canonical coordinates on   $L$  we can take 
$(x,p')$ and have explicit description for the generating function $S$, see \eqref{eq-ars1}. Then the quantization of the mapping 
  $g$ is microlocally defined by 
\begin{equation}\label{qq1}
 \Phi(g,a) u(x) =\iint e^{iS(x,g,p')-ip'x'}a(x,g,p')u(x')dp'dx',
\end{equation}
where $a$ is an amplitude function. 
Note that $\Psi=S(x,g,p')-p'x'$ in the latter integral satisfies conditions for phase functions, since $\Psi'_{x'}=-p'\ne 0$ off the zero section $p'=0$,
hence, the operator \eqref{qq1} is well defined, namely, its Schwartz kernel is defined as a distribution equal to this oscillatory integral.

\addcontentsline{toc}{section}{References}

\end{document}